\newtheorem{lem}{Lemma}[section]
\newtheorem{thm}[lem]{Theorem}
\newtheorem*{thm*}{Theorem}
\newtheorem{cor}[lem]{Corollary}
\newtheorem{prop}[lem]{Proposition}
\newtheorem*{prop*}{Proposition}
\newtheorem{question}[lem]{Question}
\newtheorem*{question*}{Question}
\theoremstyle{definition}
\newtheorem{df}[lem]{Definition}
\newtheorem{rmk}[lem]{Remark}
\newtheorem{egs}[lem]{Examples}
\newtheorem*{egs*}{Examples}
\newtheorem{eg}[lem]{Example}
\theoremstyle{remark}
\newtheorem*{remarks*}{Remarks}
\newtheorem*{note*}{Note}
\newcommand{\hk}{\textrm{hyper-K\"ahler}}
\newcommand{\hks}{hyper-K\"ahlers}
\newcommand{\simga}{\sigma}
\newcommand{\N}{\mathbb{N}}
\newcommand{\K}{\mathbb{K}}
\newcommand{\Z}{\mathbb{Z}}
\newcommand{\Q}{\mathbb{Q}}
\newcommand{\R}{\mathbb{R}}
\newcommand{\C}{\mathbb{C}}
\newcommand{\proj}{\mathbb{P}}
\newcommand{\Ccal}{\mathcal{C}}
\newcommand{\Dcal}{\mathcal{D}}
\newcommand{\Ocal}{\mathcal{O}}
\DeclareMathOperator{\ord}{\textup{ord}}
\DeclareMathOperator{\Alb}{\textup{Alb}}
\DeclareMathOperator{\alb}{\textup{alb}}
\DeclareMathOperator{\Aut}{\textup{Aut}}
\DeclareMathOperator{\Bir}{\textup{Bir}}
\DeclareMathOperator{\Bran}{\textup{Bran}}
\DeclareMathOperator{\Exc}{\textup{Exc}}
\DeclareMathOperator{\Gal}{\textup{Gal}}
\DeclareMathOperator{\GL}{\textup{GL}}
\DeclareMathOperator{\Gr}{\textup{Gr}}
\DeclareMathOperator{\Hilb}{\textrm{Hilb}}
\DeclareMathOperator{\Ker}{\textup{Ker}}
\DeclareMathOperator{\Km}{\textup{Km}}
\DeclareMathOperator{\id}{\textup{id}}
\DeclareMathOperator{\Jac}{\textrm{Jac}}
\DeclareMathOperator{\Mon}{\textup{Mon}}
\DeclareMathOperator{\NS}{\textup{NS}}
\DeclareMathOperator{\Ram}{\textup{Ram}}
\DeclareMathOperator{\rk}{\textup{rk}}
\DeclareMathOperator{\Sym}{\textup{Sym}}
\DeclareMathOperator{\supp}{\textup{supp}}
\newcommand{\simX}{\widetilde{X}}
\newcommand{\simY}{\widetilde{Y}}
\newcommand{\simf}{\widetilde{f}}
\newcommand{\simvarphi}{\widetilde{\varphi}}
\newcommand{\simnu}{\widetilde{\nu}}
\title{Galois covers of Calabi-Yau manifolds}
\author{Matteo Verni}
\date{\today}
\begin{document}
\begin{abstract}
     We study Galois rational maps between smooth projective varieties with trivial canonical bundle, with a particular interest in the case where the codomain is \hk. We obtain results about the birational geometry and the Galois closure of rational maps between Calabi-Yau manifolds.
     We apply these general results to a number of well-known examples.
 \end{abstract}
\maketitle
\tableofcontents
\section{Introduction}

A \textit{projective Calabi-Yau manifold} is a smooth projective complex algebraic variety with trivial canonical bundle. 
In the study of these varieties it is natural to consider nontrivial dominant maps between them.
However, dominant morphisms between Calabi-Yau manifolds of the same dimension do not ramify, hence they are étale, which is a severe restriction.
On the other hand, there exist many interesting dominant generically finite \textit{rational} maps between projective Calabi-Yau manifolds; the overarching theme of the present work is the study of Calabi-Yau manifolds and the rational maps between them.

Given a generically finite dominant rational map of complex algebraic varieties \(f\colon Y\dashrightarrow X\), its \textit{monodromy group} is the image of the fundamental group of a suitable Zariski neighbourhood of a general point \(x\) in the permutation group of the fiber above \(x\). 
Another way to understand this group is as follows (\cite{Har79}): if \(\K \) denotes the Galois closure of \(f^* \colon \C(X)\hookrightarrow \C(Y)\), then 
\[\Mon(f)=\Gal(\K /\C(X)).\]

In this article, we study the Galois theory of dominant rational maps between Calabi-Yau manifolds of the same dimension. For convenience, we introduce the following notation:
\begin{df}
Let \(X,Y\) be two algebraic varieties of the same dimension.
A \textit{rational cover} is a dominant rational map \(f\colon Y\dashrightarrow X\). We moreover say that \(f\) is a \textit{Calabi-Yau rational cover} when \(Y\) is a projective Calabi-Yau manifold.

A \textit{Galois cover} is a rational cover \(f \colon Y\dashrightarrow X\) such that the finite extension \(f^* \colon \C(X)\hookrightarrow \C(Y)\) is Galois. We denote the Galois group of this extension by 
\(\Gal(f)\), or by \(\Gal(Y/X)\) when there is no risk of ambiguity.
   
\end{df}
 If \(Y\) is abelian (resp. \hk), we say that \(f\) is an abelian (resp. \hk) rational cover. In this paper we will study the case where \(X\) is Calabi-Yau as well, and most often \hk.

In terms of monodromy, a rational cover \(f \colon Y \dashrightarrow X\) is Galois when its monodromy group coincides with the group of birational automorphisms of the cover, i.e. when 
\[\Mon (f)=\Bir(f):=\{g\in \Bir(Y)\; f\circ g=f\}.\] 
When \(f\) is Galois,  we can write \(X\) up to birational equivalence as a finite quotient of \(Y\): there exists an open subvariety \(U \subset Y\) on which the finite group \(G:=\Gal(f)\) acts by automorphisms and such that
\[   U/G \sim_{bir}X .\]

\begin{egs}\label{egs_examples_basic}
\hfill 

\(\bullet\) The simplest example of Calabi-Yau Galois cover of a Calabi-Yau manifold is the classical \textit{Kummer construction}, which produces a \(2\) to \(1\) map 
\[A\dashrightarrow \Km(A)\]
where \(\Km(A)\) is a K3 surface and the Galois map is given as the quotient by the action of the natural involution sending \(x\) to \(-x\).

\(\bullet\) An example in dimension \(2n\) for any \(n \geq 2\) is given by the Hilbert scheme of points \(S^{[n]}\) on a K3 surface \(S\). Indeed, there is a CY Galois cover
\[S^n\dashrightarrow S^{[n]}\] 
 obtained by composing the birational inverse of the Hilbert-Chow morphism \(S^{[n]} \rightarrow S^{(n)}\) with the natural quotient map \(S^n \rightarrow S^{(n)}\) under the permuting action of the symmetric group \(S_{n}\).
\end{egs}

Abelian Galois covers are particularly interesting: a pleasant feature of this case is that the birational automorphisms of \(A\) are actually automorphisms. Here are a few simple examples of abelian Galois covers:

\begin{egs}\label{egs_alexeev_abelian_quotients}
\hfill

\(\bullet\) One can combine the above two examples into an abelian cover of \(\Km(A)^{[n]}\) by considering the composition
\[A^n \dashrightarrow \Km(A) ^n\dashrightarrow \Km(A)^{[n]}.\] 

\(\bullet\) We consider another infinite family of \hk{} manifolds, namely that of \textit{generalized Kummer} manifolds: for each \(n\geq 1\), \( \Km_n(A)\) is a \(2n\)-dimensional \hk{}  manifold, obtained as the fiber above \(0\) of the morphism
\(A^{[n+1]} \rightarrow A\)
\[Z \rightarrow \sum_{z\in |Z|} \ell_\C (\, \Ocal_{Z,z})\cdot z \]
given by the sum map,
where \(A\) is an abelian surface and \(|Z|\) denotes the support of the \(0\)-dimensional subscheme \(Z\subset A\) of length \(n+1\). 
The manifold \(\Km_n(A)\) admits a rational map

\[A^n \dashrightarrow \Km_n(A)\]
defined on general tuples as 
\[(x_1,\dots , x_n)\rightarrow \left(x_1,\dots , x_n, \sum_{i=1}^n -x_i \right), \]
and whose Galois group is \(S_{n+1}\) (\cite{Ale2022}). In particular, one recovers the classical Kummer surface construction for \(n=1\).

\end{egs}


\subsection{Motivation}

The first question we study in this article is the following:
\begin{question}\label{quest_Galoiscover}
Does a given CY manifold \(X\) admit nontrivial Calabi-Yau Galois covers \(Y\dashrightarrow X\)?
\end{question}

This general problem is inspired by an intriguing question relating to \hk{} geometry, recently posed by Laza in a series of talks:

\begin{question}[Laza]\label{quest_laza}
Does every deformation family of \hk{} manifolds admit one member which has an abelian Galois cover?
\end{question}
Note that this question has an easy positive answer for the deformation families of K3\(^{[n]}\) and \(\Km_n\)-type, by Examples \ref{egs_alexeev_abelian_quotients}. On the other hand, for the OG6 and OG10 deformation class Question \ref{quest_laza} is much more subtle. 
Were it to have positive answer in general, it would shed light on possible new families of \hk{} manifolds.

In fact, Examples \ref{egs_alexeev_abelian_quotients} can be seen as instances of a more general construction: in \cite{Ale2022}, it is pointed out that one can produce many singular \hks \ as quotients of a power of an abelian surface by a certain Weyl group. The idea that such singular \hk{} varieties may admit smooth birational models is part of the motivation behind Laza's Question.

Question \ref{quest_Galoiscover} is also motivated by \cite{kol_lar2009}, where the authors study a problem that is in some sense inverse to Question \ref{quest_Galoiscover}; they start from a mildly singular Calabi-Yau variety \(X\) and study the birational geometry of a finite quotient \(X/G\), like its Kodaira dimension and rational connectedness.
In another direction, the paper \cite{Gac2024} disproves the existence in dimension 4 of nontrivial quotients of strict Calabi-Yau manifolds under an action that is free in codimension 2 of a finite group.

It is important to observe that not all CY rational covers are Galois. The simplest example of this is given in \cite[Remark 15.1]{kol_lar2009}, after an observation by De-Qi Zhang, and constructed as follows. Let \(A\) be an abelian surface and consider the diagram
\begin{equation}\label{ex_kollar_larsen}
\begin{tikzcd}
	A & A \\
	{\Km(A)} & {\Km(A)}
	\arrow["\mu_m", from=1-1, to=1-2]
	\arrow["\pi", dashed, from=1-1, to=2-1]
	\arrow["\pi", dashed, from=1-2, to=2-2]
	\arrow["\psi", dashed, from=2-1, to=2-2]
\end{tikzcd},
\end{equation}
where \(\pi\) is the natural Galois cover onto the Kummer surface, \(\mu_m\) is the multiplication by \(m\in \Z\) and \(\psi\) the induced self-map on the Kummer surface \(\Km(A)\). The map \(\pi\circ \mu_m =\psi \circ \pi \) is Galois and its Galois group is the group of affine transformations
\[(g,t)\in \{\id , \iota \}\ltimes A[m],\]
which act by \((g,t)(x)=g(x) +t\),
where \(A[m]\) denotes the \(m\)-torsion subgroup and \(\iota\) is the involution \(x\mapsto -x\). For \(m>2\),  the subgroup \(\{\id , \iota \}\) is not normal in this semidirect product, which implies that \(\psi\) is not Galois.
However, in the above example it follows by construction that up to composing \(\psi\) with the Calabi-Yau rational cover \(\pi\), we do obtain a Calabi-Yau Galois cover.

\color{black}
This suggests the following definition:

\begin{df}\label{def_Galois_like}
We say that a rational cover \(f \colon X'\dashrightarrow X\) of Calabi-Yau manifolds is a \textit{Calabi-Yau Galois-like cover} if it factors a Calabi-Yau Galois cover, i.e., there exists a CY rational cover \(g\colon Y\dashrightarrow X'\) such that \(f\circ g \colon Y \dashrightarrow X\) is Galois.
\end{df}

The map \(\psi\) above is a typical example of a CY Galois-like cover which is not Galois. One is then led to ask the following, which is the second main question we consider here:
\begin{question}\label{quest_Galois_factors}
Given a Calabi-Yau rational cover \(f \colon X'\dashrightarrow X\), is it a Calabi-Yau Galois-like cover?
\end{question}
More generally, we are interested in geometric criteria that allow to distinguish Calabi-Yau Galois-like covers amongst all Calabi-Yau rational covers.

From the point of view of birational geometry, a more natural formulation of Question \ref{quest_Galois_factors} would be:

\begin{question}\label{quest_birat_Galois_factors}
\textit{Given a rational cover \(X'\dashrightarrow X\) of Calabi-Yau manifolds, does it factor a Galois cover \(Y\dashrightarrow X\) with Kodaira dimension \(\kappa(Y)\) equal to zero?}
\end{question}

\begin{rmk}\label{rmk_birational_question}

Let \(Z\) be the variety (well-defined up to birational equivalence) such that \(\C(Z)\) is the Galois closure of \(f^* \colon \C(X)\hookrightarrow \C(Y)\). 
Then an equivalent formulation of Question \ref{quest_birat_Galois_factors} is whether \(Z\) has Kodaira dimension equal to zero.
\end{rmk}

Admitting the abundance conjecture for Kodaira dimension zero, the main difference between Question \ref{quest_birat_Galois_factors} and Question \ref{quest_Galois_factors} is that in the former we allow the variety \(Y\) to be a CY \textit{variety} in the sense of \cite{kol_lar2009}, that is, a K-trivial variety with canonical singularities.

As already mentioned, checking that \(f\colon X' \dashrightarrow X\) is not Galois amounts to computing \(\Bir(X'/X)\). To disprove that it is a Calabi-Yau Galois-like cover we need to study first all Calabi-Yau rational covers of \(X'\) and their birational automorphisms over \(X'\).

\begin{rmk}\label{rmk_surfaces}
If \(\dim X =2\), the two questions are equivalent, since any smooth surface \(S\) of Kodaira dimension zero with \(p_g (S)\neq 0\) is birational to a Calabi-Yau surface.
\end{rmk}

We will study Question \ref{quest_Galois_factors} and Question \ref{quest_birat_Galois_factors} for the following examples.

\begin{eg}[Voisin map on \(F_1(W)\)]\label{eg_voisinmap_F_1}
     Let \(W\subset \proj^5\) be a smooth cubic fourfold and let \(F:=F_1(W)\) be the Hilbert scheme of lines lying on \(W\): it is called the \textit{Fano variety of lines on \(W\)}. It is a \hk{} manifold of dimension 4 (\cite{BeaDon1985}). We can construct a rational endomorphism 
\[\nu \colon F\dashrightarrow F\] as follows: if the line \([\ell] \in F\) is general, there is only one projective plane \(\Pi_\ell\) tangent to \(W\) everywhere along \(\ell\). Counting intersections with multiplicity, we have
    \[\Pi_\ell \cap W = 2\ell +\ell'\]
    where \(\ell'\) is another line, so we may set \(\nu([\ell]):=[\ell']\).
    This rational map was introduced in \cite{Voi2004} and it is often called the \textit{Voisin map}. It has degree 16 and has been the object of much investigation (see for example \cite{Ame2009}, \cite{GouKou2023}, \cite{GioGio2024}). 
     
\end{eg}  

\begin{eg}[Voisin map to LLSvS]\label{eg_voisinmap_LLSvS}
In the same context as above, Voisin introduces a rational map of degree 6 \[F\times F \dashrightarrow Z\]

\noindent where \(Z\) is the LLSvS \hk{}  \(8\)-fold that was constructed in \cite{LLSvS2017}, also starting from the cubic \(W\). We will recall the construction of the rational map in Section \ref{sec_uniruled_abovebranch}.

\end{eg}
\begin{eg}[CY fibered in abelian varieties]\label{eg_abelian_fibrations}
    Given a fibration \(\pi: X\rightarrow B\) whose generic fiber is abelian, one can produce rational endomorphisms 
\begin{equation}\label{eq_abelian_fibered_CY_example}
\begin{tikzcd}
	X && X \\
	& {B}
	\arrow["f", dashed, from=1-1, to=1-3]
	\arrow[from=1-1, to=2-2]
	\arrow[from=1-3, to=2-2]
\end{tikzcd}    
\end{equation}
which act on the general abelian fiber as multiplication by an appropriately chosen integer. In the case where \(X\) is a K3 surface and \(\pi\) an elliptic fibration on \(\proj^1\) this is a classical construction, see for example \cite[Section 3]{BogTsc1999}. Such rational endomorphisms of \(X\), over the smooth locus \(U\subset \proj^1\), is the quotient by a group scheme \(G\)  finite over \(U\). However this does not necessarily make \(f\) Galois as the group scheme \(G\) might not be trivial (see Section \ref{sec_abelian_fibrations}).

\end{eg} 

\subsection{Main results}
Our first result answers Question \ref{quest_Galoiscover} in the negative, by providing examples of \hk{} manifolds which admit no nontrivial CY Galois covers.
\begin{thm}\label{thm_HK2_rho=1}
    Let \(X\) be a \hk{} manifold with \(b_2(X)=23\) and \(\rho_X=1\). Let \(\phi \colon Y \dashrightarrow X\) be a Calabi-Yau rational cover. Then \(Y\) admits no nontrivial birational automorphism commuting with \(\phi\), i.e.,
    \[\Bir(\phi)=\{\id\}.\]
    In particular, if \(\phi\) is Galois, then it is an isomorphism.
\end{thm}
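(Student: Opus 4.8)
The plan is to analyze a putative nontrivial $g\in\Bir(\phi)$ through the Hodge structure on $H^2$. First I would pull back the symplectic form: setting $\sigma_Y:=\phi^*\sigma_X$, the map $\phi$ is generically \'etale (a dominant generically finite map of smooth varieties of the same dimension is a local isomorphism on tangent spaces at a general point), so $\sigma_Y$ is a holomorphic $2$-form whose top power $\sigma_Y^{\wedge n}$ is a nonzero section of $K_Y=\mathcal O_Y$, hence a nowhere-vanishing constant. Thus $\sigma_Y$ is everywhere nondegenerate and $Y$ is a projective holomorphic symplectic manifold. Since $\phi\circ g=\phi$ gives $g^*\sigma_Y=\sigma_Y$, the map $g$ is symplectic; as $\Bir(\phi)$ embeds into the symmetric group of a general fibre it is finite, so $g$ has finite order.

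The second step records the cohomological constraint. From $\phi\circ g=\phi$ one gets $g^*\circ\phi^*=\phi^*$, so $g^*$ acts as the identity on the $23$-dimensional subspace $V:=\phi^*H^2(X,\Q)\subseteq H^2(Y,\Q)$, which carries an injective image of the weight-two Hodge structure of $X$ (injectivity via $\phi_*\phi^*=(\deg\phi)\,\mathrm{id}$). Here the hypotheses enter decisively: since $b_2(X)=23$ and $\rho_X=1$, the transcendental part $W:=\phi^*T(X)_\Q\subseteq V$ is an \emph{irreducible} Hodge structure of K3 type of the maximal dimension $22$, and $g^*$ fixes $W$ together with a pulled-back ample class $\phi^*h$ pointwise.

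The heart of the argument is then to upgrade ``$g^*$ fixes $V$'' to ``$g^*=\mathrm{id}$ on $H^2(Y,\Q)$''. For a finite-order symplectic automorphism the coinvariant sublattice $S_g:=\big(H^2(Y,\Q)^{g}\big)^{\perp}$ is negative definite and contained in $\NS(Y)_\Q$, and by the previous step $S_g\perp V$; I want to show $S_g=0$. The plan is to first pin down the structure of $Y$ by feeding the nondegeneracy of $\sigma_Y$ into the Beauville--Bogomolov decomposition of $Y$: a strict Calabi--Yau factor would kill $\sigma_Y$ along its directions, so $Y$ is, up to finite \'etale cover, a product of an abelian variety and irreducible holomorphic symplectic manifolds, and the holomorphic Lefschetz fixed-point formula rules out free symplectic quotients. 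One then compares $V$ (of signature $(3,20)$, with maximal irreducible transcendental block $W$) with $H^2(Y)$ and its Beauville--Bogomolov form, aiming to conclude $b_2(Y)=23$, $\rho(Y)=1$, $Y$ of $\mathrm{K3}^{[n]}$ type and $V=H^2(Y,\Q)$, whence $S_g=0$.

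The main obstacle is exactly this structural step: excluding extra N\'eron--Severi classes (equivalently, extra holomorphic symplectic or abelian factors) on which a nontrivial symplectic deck transformation could act while still fixing $V$. That genericity is essential is shown by the covers $S^n\dashrightarrow S^{[n]}$ of Example~\ref{egs_examples_basic} (and their swap/permutation deck groups), which exist precisely because $\rho(S^{[n]})=\rho(S)+1\geq 2$; the argument must therefore use $\rho_X=1$ — the maximality and irreducibility of $W$, and a lattice-theoretic comparison of $T(X)$ with $T(Y)$ — in an essential way to force $S_g=0$. Once $g^*=\mathrm{id}$ on $H^2(Y,\Q)$, I would conclude $g=\mathrm{id}$ from the injectivity of the representation $\Bir(Y)\to O(H^2(Y,\Z))$ for holomorphic symplectic $Y$; the final statement about Galois $\phi$ then follows, since $\Bir(\phi)=\Gal(\phi)=\{\id\}$ forces $\deg\phi=1$.
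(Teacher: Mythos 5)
Your strategy coincides with the paper's in outline --- pull back $\sigma_X$ to make $Y$ holomorphic symplectic, observe that $g^*$ fixes the $23$-dimensional subspace $V=\phi^*H^2(X,\Q)$ pointwise, then try to force $V=H^2(Y,\Q)$ --- but the two decisive steps are missing. The structural step you describe only as an ``aim'' is where all the content lies, and the tools you name do not suffice. To get $b_2(Y)=23$ and hence $V=H^2(Y,\Q)$ you need an \emph{upper} bound on $b_2(Y)$; the paper gets it from Guan's restrictions on the Betti numbers of \hk{} fourfolds ($b_2\le 8$ or $b_2=23$, with $b_3=0$ and $b_4$ then determined), which can only be applied after $Y$ is known to be irreducible \hk. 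To exclude abelian or product factors in the Beauville--Bogomolov decomposition of $Y$ (an abelian fourfold has $b_2=28\ge 23$, so $H^2$ alone does not rule it out) the paper uses Verbitsky's theorem that $\Sym^kT(X)_\Q$ injects into $H^{2k}(Y,\Q)$ under $\phi^*$, so that $b_4(Y)\ge\binom{23}{2}=253$, killing the abelian case ($b_4=70$), while the irreducibility of the $22$-dimensional Hodge structure $T(X)_\Q$ kills the $\mathrm{K3}\times\mathrm{K3}$ case. The holomorphic Lefschetz formula you invoke only concerns freeness of quotients, not whether such a $Y$ can dominate $X$ at all.

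The second gap is your concluding step: the representation $\Bir(Y)\to O(H^2(Y,\Z))$ is \emph{not} injective for general \hk{} manifolds --- generalized Kummer varieties and OG6 carry nontrivial automorphisms acting trivially on $H^2$ --- and your fallback of proving that $Y$ is of $\mathrm{K3}^{[n]}$-type is not available: it is an open problem whether $b_2=23$ characterizes the $\mathrm{K3}^{[2]}$ deformation type among \hk{} fourfolds, and nothing in your argument produces a deformation equivalence. The paper circumvents this by first showing that $H^*(Y,\Q)$ is generated in degree two (Verbitsky's injectivity of $\Sym^2H^2(Y,\Q)\to H^4(Y,\Q)$ together with Guan's value of $b_4(Y)$ forces equality), so that $g^*=\mathrm{id}$ on the whole ring $H^*(Y,\Q)$, and then invoking the theorem of Jiang--Liu that a birational self-map of a \hk{} manifold acting trivially on all of $H^*(Y,\Q)$ is the identity. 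Without this, or an equivalent substitute, your argument does not close even once the structural step is completed.
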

For a very general smooth cubic fourfold \(W\subset \proj^5\), the variety \(F=F_1(W)\) in Example~\ref{eg_voisinmap_F_1} satisfies all hypothesis of \(X\) in Theorem \ref{thm_HK2_rho=1}, from which we deduce the following.
\begin{cor}\label{cor_F_1generic}
    For a very general cubic fourfold \(W\subset \proj^5\), \(F_1(W)\) admits no nontrivial Calabi-Yau Galois covers. 
\end{cor}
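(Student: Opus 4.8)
The plan is to reduce the statement to Theorem~\ref{thm_HK2_rho=1} by checking that, for a very general cubic fourfold \(W\), the Fano variety of lines \(F=F_1(W)\) meets both numerical hypotheses of that theorem, namely \(b_2(F)=23\) and \(\rho_F=1\). Once these are in place the conclusion is immediate: any Calabi-Yau rational cover \(\phi\colon Y\dashrightarrow F\) satisfies \(\Bir(\phi)=\{\id\}\), so if \(\phi\) is moreover Galois then \(\Gal(\phi)=\Bir(\phi)\) is trivial and \(\phi\) is an isomorphism; hence \(F\) carries no nontrivial Calabi-Yau Galois cover.

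First I would recall, following Beauville--Donagi \cite{BeaDon1985}, that \(F\) is a \hk{} manifold deformation equivalent to the Hilbert scheme of two points on a K3 surface. Since the second Betti number is a deformation invariant and equals \(23\) for the K3\(^{[2]}\) deformation class, this already gives \(b_2(F)=23\) for \emph{every} smooth cubic fourfold, with no genericity hypothesis needed.

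The substantive step is the Picard rank. For a \hk{} manifold the Néron--Severi group is the lattice of integral \((1,1)\)-classes, \(\NS(F)=H^2(F,\Z)\cap H^{1,1}(F)\), and under the Beauville--Donagi Hodge isometry the primitive part of \(H^2(F,\Z)\) is identified, up to a Tate twist, with the primitive cohomology \(H^4(W,\Z)_{\mathrm{prim}}\) of the cubic. The cubic fourfolds for which \(H^4(W)\) acquires a Hodge class beyond the multiples of \(h^2\) form a countable union of proper (Noether--Lefschetz, i.e. Hassett) subvarieties of the \(20\)-dimensional moduli space of cubic fourfolds. For \(W\) outside this countable union---that is, for \(W\) very general---the only algebraic class on \(F\) is the Plücker polarization, so \(\rho_F=\rk\NS(F)=1\).

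I expect the Picard-rank computation to be the only non-formal point, and its content is precisely the genericity of the Hodge structure in the family of cubic fourfolds: one must combine the Beauville--Donagi identification of Hodge structures with the fact that the Noether--Lefschetz locus is a countable union of proper closed subsets, so that its complement (the very general locus) still consists of cubics with \(\rho_F=1\). With \(b_2(F)=23\) and \(\rho_F=1\) established, Theorem~\ref{thm_HK2_rho=1} applies verbatim and yields the corollary.
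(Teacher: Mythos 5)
Your proposal is correct and follows exactly the paper's route: the corollary is deduced by verifying that \(F_1(W)\) satisfies the hypotheses of Theorem~\ref{thm_HK2_rho=1}, with \(b_2=23\) coming from the Beauville--Donagi identification of \(F_1(W)\) with the K3\(^{[2]}\) deformation class and \(\rho=1\) from the very-generality of \(W\) via the Hodge isometry with \(H^4(W)_{\mathrm{prim}}\) and the countability of the Noether--Lefschetz (Hassett) loci. The paper simply asserts these hypotheses hold; you have supplied the standard justifications, which is a fair completion of the same argument.
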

Theorem \ref{thm_HK2_rho=1} also answers negatively Question \ref{quest_Galois_factors}.
\begin{cor}
For a very general cubic fourfold \(W\subset \proj^5\), the Voisin map \(\nu \colon F_1(W) \dashrightarrow F_1(W)\) is not a Calabi-Yau Galois-like cover.
\end{cor}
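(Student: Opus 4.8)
The plan is to argue by contradiction and reduce the statement to Theorem~\ref{thm_HK2_rho=1} via a degree count. First I would recall that for a very general cubic fourfold \(W\subset \proj^5\), the Fano variety \(F=F_1(W)\) of Example~\ref{eg_voisinmap_F_1} is a \hk{} manifold with \(b_2(F)=23\) and \(\rho_F=1\), precisely as noted just before Corollary~\ref{cor_F_1generic}. In particular \(F\) satisfies all the hypotheses imposed on \(X\) in Theorem~\ref{thm_HK2_rho=1}, so that theorem is available for any Calabi-Yau rational cover landing on \(F\).

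Next, suppose for contradiction that the Voisin map \(\nu\colon F\dashrightarrow F\) is a Calabi-Yau Galois-like cover. By Definition~\ref{def_Galois_like}, this means there exists a Calabi-Yau rational cover \(g\colon Y\dashrightarrow F\) such that the composite \(\phi:=\nu\circ g\colon Y\dashrightarrow F\) is Galois. Since \(Y\) is a projective Calabi-Yau manifold and both \(g\) and \(\nu\) are dominant generically finite rational maps, the composite \(\phi\) is again dominant and generically finite, hence a Calabi-Yau rational cover onto \(F\). Applying Theorem~\ref{thm_HK2_rho=1} to \(\phi\), the assumption that \(\phi\) is Galois forces \(\phi\) to be an isomorphism, and in particular \(\deg\phi=1\).

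The contradiction then comes purely from the multiplicativity of degrees of rational covers under composition: since \(\deg\nu=16\) and \(\deg g\geq 1\), we get \(\deg\phi=\deg(\nu)\cdot\deg(g)=16\cdot\deg(g)\geq 16\), which is incompatible with \(\deg\phi=1\). Hence no such \(g\) can exist, and \(\nu\) is not a Calabi-Yau Galois-like cover.

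The argument is essentially immediate once Theorem~\ref{thm_HK2_rho=1} is in hand; the only point requiring (minor) care is checking that \(\phi=\nu\circ g\) genuinely qualifies as a Calabi-Yau rational cover, i.e. that composing the rigid self-map \(\nu\) with the rational cover \(g\) preserves dominance, generic finiteness, and the Calabi-Yau property of the source. I do not expect a real obstacle here, as these are formal consequences of \(Y\) being Calabi-Yau together with \(\nu\) and \(g\) being dominant generically finite; the genuine content—the rigidity forced by \(\rho_F=1\)—is entirely absorbed into Theorem~\ref{thm_HK2_rho=1}, which is the actual engine driving the corollary.
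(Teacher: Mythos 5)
Your proposal is correct and is essentially the argument the paper intends (the paper leaves the proof of this corollary implicit, deducing it directly from Theorem \ref{thm_HK2_rho=1}): any factoring Galois cover \(\nu\circ g\) would be a Calabi-Yau Galois cover of \(F_1(W)\), hence an isomorphism by the theorem, contradicting \(\deg(\nu\circ g)=16\cdot\deg(g)\geq 16\). The point you flag at the end is indeed harmless, since degrees of dominant generically finite rational maps are multiplicative under composition and the source \(Y\) is Calabi-Yau by assumption.
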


The second main result is devoted to the specific case of abelian Galois covers; in this setting Question \ref{quest_birat_Galois_factors} is equivalent to Question~\ref{quest_Galois_factors} (see Lemma \ref{lem_question_equivalence_for_abelian_covers}).
While the previous corollaries deal only with very general cubic fourfolds, the following applies to all smooth cubic fourfolds, even those whose Fano variety of lines admits an abelian Galois cover (we recall their construction in Section \ref{sec_abelian_bounds}). 

\begin{thm}\label{thm_vosinmap_abeliancover}
    Let \(W\subset \proj^5\) be a smooth cubic fourfold. Then the Voisin map \(\psi \colon F_1(W) \dashrightarrow F_1(W) \) is not an abelian Galois-like cover.
\end{thm}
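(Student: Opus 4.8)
The plan is to argue by contradiction, exploiting the fact that the ramification of a quotient of an abelian variety is forced to be non-uniruled, whereas the Voisin self-map ramifies along a uniruled divisor. First I would set up the Galois-theoretic reduction. Suppose the Voisin self-map \(\nu\colon F\dashrightarrow F\), \(F=F_1(W)\), were an abelian Galois-like cover. Then there is an abelian variety \(A\) and a dominant rational map \(g\colon A\dashrightarrow F\) such that \(\nu\circ g\) is Galois; write \(\tilde G=\Gal(\nu\circ g)=\Gal\big(\C(A)/\C(F)\big)\). Since \(A\) is an abelian variety, every element of \(\Bir(A)\) is an automorphism, hence an affine transformation \(x\mapsto\phi(x)+t\) with \(\phi\in\Aut_{\mathrm{grp}}(A)\) and \(t\in A\); thus \(\tilde G\) acts on \(A\) by affine transformations and \(F\sim_{bir}A/\tilde G\). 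The intermediate field \(\nu^*\C(F)\subset\C(A)\) corresponds to a subgroup \(H\le\tilde G\) with \([\tilde G:H]=\deg\nu=16\), and by Artin's lemma \(\C(A)/\C(A)^H\) is automatically Galois, so \(g\) is itself an abelian Galois cover with group \(H\) and \(F\sim_{bir}A/H\). Consequently \(\nu\) is birationally identified with the natural quotient
\[
q\colon A/H \dashrightarrow A/\tilde G .
\]

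Next I would analyse the ramification of \(q\). All ramification of \(A\to A/\tilde G\), and hence of \(q\), occurs along fixed divisors of the quasi-reflections \((\phi,t)\in\tilde G\), namely those whose fixed locus \(\{x:(\mathrm{id}-\phi)x=t\}\) has codimension one. Such a fixed locus is a union of translates of the abelian subvariety \(B:=\ker(\mathrm{id}-\phi)^0\), which has \(\dim B=3\). Its image under \(\pi_H\colon A\to A/H\) is therefore a finite quotient of a \(3\)-dimensional abelian variety, and thus has Kodaira dimension \(0\); in particular it is non-uniruled. Hence every component of the ramification divisor of \(q\) is non-uniruled. Because \(F\) is a smooth minimal model and \(A/\tilde G\) is \(K\)-trivial, the two are isomorphic in codimension one, so this non-uniruledness transports to each component of the ramification divisor of \(\nu\) on \(F\) itself.

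Finally I would invoke the geometric input: the Voisin self-map ramifies along a \emph{uniruled} divisor of \(F\), which is the content of Section \ref{sec_uniruled_abovebranch} (the divisor lying above the branch locus is dominated by the lines of the cubic, hence uniruled), and which holds for \emph{every} smooth cubic fourfold \(W\). This directly contradicts the previous paragraph, ruling out any abelian Galois-like factorization of \(\nu\). I expect the delicate point to be the matching of divisors through the birational identification \(\nu\sim q\): one must verify that the uniruled divisor produced in Section \ref{sec_uniruled_abovebranch} is a genuine, non-exceptional component of the ramification of \(\nu\) — equivalently, a divisorial valuation of \(\C(F)\) whose inertia in \(\C(A)\) is nontrivial — so that it really arises from the fixed locus of a quasi-reflection rather than from an exceptional divisor introduced when resolving the indeterminacy of \(\nu\) or \(g\). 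The remaining ingredients (the affine structure of \(\tilde G\) and the Kodaira-dimension-zero computation for finite quotients of abelian varieties) are routine. Note that, unlike Theorem \ref{thm_HK2_rho=1}, this argument uses nothing about the Picard rank of \(F\), so it applies uniformly to all smooth cubic fourfolds, including those whose Fano variety of lines does admit abelian Galois covers.
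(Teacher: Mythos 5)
Your reduction of \(\nu\) to a quotient map \(q\colon A/H\dashrightarrow A/\tilde G\) is correct, but the argument breaks at the step ``a finite quotient of a \(3\)-dimensional abelian variety has Kodaira dimension \(0\); in particular it is non-uniruled.'' This is false: \(E/\langle\pm1\rangle\cong\proj^1\), so for instance \((E_1\times E_2\times E_3)/\langle(-1,\id,\id)\rangle\cong\proj^1\times E_2\times E_3\) is a uniruled finite quotient of an abelian threefold. The stabilizer in \(H\) of a fixed divisor of a quasi-reflection acts on that divisor by arbitrary affine transformations, so nothing prevents its image in \(A/H\) from being uniruled; the claimed non-uniruledness of the ramification components of \(q\) does not follow, and the contradiction evaporates. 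The other half of your contradiction is also unsupported: Section \ref{sec_uniruled_abovebranch} concerns the degree-\(6\) map \(F\times F\dashrightarrow Z\) to the LLSvS eightfold, not the self-map \(\nu\colon F\dashrightarrow F\), and what is proved there is the \emph{non}-uniruledness of a component above the branch locus (to contradict Galois-likeness via Proposition \ref{prop_uniruled_above_branch}). Nowhere is it established that \(\nu\) has a nonempty branch divisor. Indeed, your own setup points the other way: since \(A/\tilde G\) is birational to the \(K\)-trivial \(F\) while \(K_A=\pi^*K_{A/\tilde G}+R\) with \(R\) the ramification divisor, \(\tilde G\) is forced to act freely in codimension one, so the quotient picture predicts \(\Bran(\nu)=\emptyset\); a contradiction along these lines would therefore require proving \(\Bran(\nu)\neq\emptyset\), a geometric input you have not supplied.

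For comparison, the paper's proof is group-theoretic rather than ramification-theoretic. It uses the computation \(\Mon(\nu)=S_{16}\) from \cite{GioGio2024} together with the surjection \(\Gal(\nu\circ g)\twoheadrightarrow\Gal(\nu)=\Mon(\nu)\) to produce elements of order \(11\) and \(13\) in \(\Bir(\nu\circ g)\subset\Aut(A)\). These cannot both be translations (an \(11\)-cycle and a \(13\)-cycle in \(S_{16}\) cannot commute, since commuting cycles have disjoint or equal supports), so one of them has a nontrivial linear part \(g_0\in\Aut_0(A)\) of order \(11\) or \(13\), contradicting the bound \(\alpha(\ord(g_0))\leq 2\dim A=8\) of Corollary \ref{cor_bound_abelian_automorphism}, since \(\varphi(11)=10>8\). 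If you want to salvage a ramification-based argument, the missing ingredients are precisely a proof that \(\Bran(\nu)\) is nonempty and a correct constraint on which divisors of \(A/H\) can ramify.
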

Above, as in Definition \ref{def_Galois_like}, we say that \(\psi: X'\dashrightarrow X\) is abelian Galois-like if there exist an abelian variety \(A\) and a rational cover \( \varphi \colon A \dashrightarrow X'\) such that \(\psi\circ \varphi\) is Galois.

Theorem \ref{thm_vosinmap_abeliancover} allows us to answer negatively Question \ref{quest_birat_Galois_factors}.



\begin{cor}
Let \(W\) be a cubic fourfold whose Fano variety of lines admits an abelian Galois cover
\(h \colon A\dashrightarrow F_1(W)\), and let \(\nu \colon F_1(W)\dashrightarrow F_1(W)\) be the Voisin map.
The composition \(\nu\circ h\) does not factor any Galois cover of Kodaira dimension zero.
\end{cor}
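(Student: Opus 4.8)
The plan is to argue by contradiction, reducing everything to Theorem \ref{thm_vosinmap_abeliancover} via Kawamata's characterization of abelian varieties. Suppose the composition $\nu \circ h \colon A \dashrightarrow F_1(W)$ did factor a Galois cover of Kodaira dimension zero. By definition this means there is a variety $Y$ with $\kappa(Y)=0$ together with a rational cover $g\colon Y \dashrightarrow A$ such that $(\nu \circ h)\circ g \colon Y \dashrightarrow F_1(W)$ is Galois. Replacing $Y$ by a smooth projective birational model (which changes neither $\kappa(Y)$ nor the existence of the rational cover $g$), I may assume $Y$ smooth projective. Setting $\varphi := h\circ g \colon Y \dashrightarrow F_1(W)$, associativity of composition gives $(\nu\circ h)\circ g = \nu \circ \varphi$, so that $\nu \circ \varphi$ is Galois.

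The heart of the argument is to show that $Y$ is in fact birational to an abelian variety. Here the role of $h$ is essential: it supplies an \emph{abelian} intermediate target $A$. Indeed, $g\colon Y \dashrightarrow A$ is a rational cover, hence dominant and generically finite with $\dim Y = \dim A$; since $A$ is abelian and $Y$ is smooth, $g$ extends to a morphism. Functoriality of the Albanese then yields a surjective homomorphism $\Alb(Y) \twoheadrightarrow \Alb(A) = A$ (surjective because $g$ is dominant), so that $\dim \Alb(Y) \ge \dim A = \dim Y$. On the other hand, since $\kappa(Y)=0$, Kawamata's theorem guarantees that the Albanese map $\alb_Y \colon Y \to \Alb(Y)$ is surjective with connected fibers; surjectivity forces $\dim \Alb(Y) \le \dim Y$. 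Combining the two inequalities gives $\dim \Alb(Y) = \dim Y$, whence $\alb_Y$ is generically finite with connected fibers, i.e.\ birational. Therefore $Y$ is birational to the abelian variety $B := \Alb(Y)$.

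Transporting $\varphi$ across this birational equivalence produces an abelian variety $B$ and a rational cover $\varphi' \colon B \dashrightarrow F_1(W)$ for which $\nu \circ \varphi'$ is Galois. By definition this exhibits the Voisin map $\nu$ as an abelian Galois-like cover, contradicting Theorem \ref{thm_vosinmap_abeliancover}. Hence no factorization of $\nu\circ h$ through a Galois cover of Kodaira dimension zero can exist, which is exactly the assertion of the corollary; phrased differently, the corollary is the birational strengthening, in the sense of Question \ref{quest_birat_Galois_factors}, of Theorem \ref{thm_vosinmap_abeliancover}.

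The only genuinely delicate point is the middle paragraph: one must invoke Kawamata's characterization of abelian varieties among varieties of Kodaira dimension zero and verify that the rational cover $g$ forces maximal Albanese dimension on $Y$. Everything else — reduction to a smooth model, rewriting the composition, and transporting $\varphi$ along a birational map — is formal.
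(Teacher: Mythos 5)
Your argument is correct and follows essentially the same route the paper intends: the middle paragraph is precisely the paper's Lemma \ref{lem_question_equivalence_for_abelian_covers} (a rational cover from a smooth projective variety of Kodaira dimension zero onto an abelian variety forces the source to be birationally abelian, via Kawamata's result on the Albanese map), after which the contradiction with Theorem \ref{thm_vosinmap_abeliancover} is immediate. You have merely reproved that lemma inline instead of citing it.
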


A different negative answer to Question \ref{quest_birat_Galois_factors} is
given at the end of Section \ref{sec_abelian_fibrations}, and concerns the rational maps in Example  \ref{eg_abelian_fibrations}, for K3 surfaces.

We now turn to the study of properties of the \textit{branch divisor} of rational covers, which we now define. 
Let \(\phi \colon X'\dashrightarrow X\) be a dominant rational map between CY manifolds, and fix a resolution of indeterminacies, meaning a diagram
\[\begin{tikzcd}
	& Z \\
	{X'} && X
	\arrow["\tau"', from=1-2, to=2-1]
	\arrow["{\widetilde{f}}", from=1-2, to=2-3]
	\arrow["f", dashed, from=2-1, to=2-3]
\end{tikzcd}\]
with \(Z\) smooth and \(\tau\) birational.
We denote by \(\Ram(\simf \,)\subset Z\) the ramification divisor. We define the branch divisor \(\Bran(f)\subset X\) as \(\simf_* \Ram(\simf \,)\). It does not depend on the choice of \(Z\) and \(\simf\). We say that a component \(P\subset |\Ram(\simf \,)|\) is \textit{essential} if \(\simf_*P\neq 0\), i.e., if \(P\) dominates a divisorial component of the branch locus of \(\simf\).  
Note that \(\Bran(f)\) is uniruled, since it is dominated by \(\Ram(\simf \,)\) and by the canonical bundle formula we have
\begin{equation}\label{eq_Ram_=_Exc}
    \Ram(\simf )=\Exc(\tau),
\end{equation}

where \(\Exc(\tau)\) denotes the exceptional divisor of \(\tau\).

The next result is a criterion for a CY rational cover \(f\) to be CY Galois-like, based on the study of essential exceptional divisors.

\begin{prop}\label{prop_uniruled_above_branch}
In the above notation, suppose \(f\) is CY Galois-like. Then each essential irreducible component \(D \subset \widetilde{f}^{-1}(\Bran(\widetilde{f}))\) with its reduced structure is a uniruled variety.
\end{prop}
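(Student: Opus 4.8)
The plan is to use the Galois presentation supplied by the hypothesis to recognize every essential $D$ as an exceptional divisor of a resolution of a finite quotient, and then to invoke the uniruledness of exceptional divisors — the same principle already used to prove that $\Bran(f)$ is uniruled. Since $f$ is CY Galois-like, fix a CY rational cover $g\colon Y\dashrightarrow X'$ with $h:=f\circ g$ Galois, and set $G=\Gal(h)$. Because $\C(Y)/\C(X)$ is Galois and $\C(X')$ is intermediate, $\C(Y)/\C(X')$ is automatically Galois, so $g$ is itself a Galois cover with group $H:=\Gal(g)\le G$. I would then fix an open $U\subseteq Y$, with complement of codimension $\ge 2$, on which $G$ acts by automorphisms, so that $U/G\sim_{bir}X$ and $U/H\sim_{bir}X'$, and let $\bar f\colon U/H\to U/G$ be the induced finite morphism. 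Since $Y$ and $X$ are CY, $h$ is \'etale in codimension $1$, so $G$ acts freely in codimension $1$ on $U$; in particular $U/G$ and $U/H$ are normal and $\bar f$ is genuinely finite.

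The first step is to show that every branch component $B\subset X$ of $\widetilde f$ has center of codimension $\ge 2$ on $U/G$. By the chain rule for ramification divisors applied to $h=f\circ g$, the branch of $f$ is contained in the branch of $h$. Now $h$ is a CY Galois cover, so $\Ram(\widetilde h)=\Exc(\sigma)$ for a resolution $\sigma\colon W\to Y$, exactly as in equation \eqref{eq_Ram_=_Exc}; hence every essential branch component of $h$ is the $\widetilde h$-image of a $\sigma$-exceptional divisor, whose center on $Y$, and therefore on $U/G$, has codimension $\ge 2$. Thus the valuation $v_B$ of $B$ has center of codimension $\ge 2$ on $U/G$.

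Next I would transport this to $U/H$ through the finite map. Let $D$ be an essential component with $\widetilde f(D)=B$, and let $v_D$ be the corresponding divisorial valuation of $\C(X')=\C(U/H)$. Since $D$ dominates $B$, the restriction of $v_D$ to $\C(X)=\C(U/G)$ is $v_B$. For the proper morphism $\bar f$ centers map to centers, so $\bar f\bigl(\mathrm{center}_{U/H}(v_D)\bigr)=\mathrm{center}_{U/G}(v_B)$, which has codimension $\ge 2$ by the first step. As $\bar f$ is finite it preserves the dimension of images, so the center of $v_D$ on $U/H$ cannot be a divisor; it has codimension $\ge 2$. Consequently $D$ is realized by an exceptional divisor of any resolution of $U/H$, and exceptional divisors of birational morphisms from smooth varieties are uniruled. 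This gives that $D$ is uniruled, as desired.

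The main obstacle is the first step: proving that the branch of $f$ sits over the codimension $\ge 2$ quotient-singular locus of $U/G$. This is precisely where both hypotheses enter — the Galois-like assumption produces the group $G$ and the quotient presentation $U/G\sim_{bir}X$, while the CY condition forces $h$ to be \'etale in codimension $1$, so that the branch divisor is \emph{created} only by resolving quotient singularities rather than by honest ramification. Without this, an essential $D$ could be a genuine divisor of $X'$ lying finite over the uniruled divisor $B$, and a finite cover of a uniruled variety need not be uniruled; it is exactly the finiteness of $\bar f\colon U/H\to U/G$, combined with the codimension statement, that rules this out.
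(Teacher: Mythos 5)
Your overall strategy is sound and close in spirit to the paper's: both arguments hinge on the fact that, because \(h=f\circ g\) is Galois and \(Y\), \(X\) are \(K\)-trivial, everything lying over \(\Bran(\widetilde f)\) must be ``exceptional'' over the smooth cover \(Y\). Your Step~0 (that \(g\) is automatically Galois), Step~1 (that each branch component \(B\) has center of codimension \(\ge 2\) on \(U/G\), via one exceptional extension \(v_P\) of \(v_B\) and uniqueness of centers), and the transport of this to \(U/H\) through the finite map \(\bar f\) are all essentially correct. The genuine gap is in the last sentence. The principle ``exceptional divisors of birational morphisms \emph{from} smooth varieties are uniruled'' is false as stated: blow up the vertex of the affine cone over a smooth abelian (or general type) variety \(V\); the total space is smooth and the exceptional divisor is isomorphic to \(V\), which is not uniruled. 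The correct classical statement requires the \emph{target} to be smooth, and your target \(U/H\) is not smooth --- it has quotient singularities wherever \(H\) has fixed points. To conclude along your route you would need the much deeper theorem that divisors exceptional over klt (e.g.\ quotient) singularities are uniruled. There is also a secondary issue: \(U/H\) is not proper, so you must justify that \(v_D\) has a center on \(U/H\) at all before speaking of ``the'' exceptional divisor realizing it.

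Both problems disappear if you finish the argument one level up, on the smooth variety \(Y\), which is exactly what the paper does. Every extension \(w\) of \(v_D\) to \(\C(Y)\) restricts to \(v_B\) on \(\C(X)\); since \(G\) permutes the extensions of \(v_B\) transitively and one of them (your \(v_P\)) has center of codimension \(\ge 2\) on \(Y\), \emph{all} of them do, so \(w\) is realized by a divisor \(E\) exceptional over the smooth \(Y\); such an \(E\) is uniruled and dominates \(D\) generically finitely, whence \(D\) is uniruled. The paper's proof is the geometric version of this: it picks a \(G\)-equivariant resolution \(\sigma\colon \widetilde Y\to Y\) of the Galois cover, uses \(G\)-equivariance to show that the whole fiber over a branch point consists of ramification points, so that any divisor \(E\subset\widetilde Y\) above \(D\) satisfies \(E\subset \Ram(\widetilde\phi)=\Exc(\sigma)\), and then applies the uniruledness of exceptional divisors over the \emph{smooth} \(Y\). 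I would encourage you to restructure your final step this way rather than descending to \(U/H\).
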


We apply this criterion to Example \ref{eg_voisinmap_LLSvS}, proving that the second Voisin map is not a Calabi-Yau Galois-like cover (see Corollary \ref{cor_uniruled_LLSvS}).

In our last result, we prove that the branch divisors of certain Galois covers of \hk{} manifolds are negative with respect to the Beauville-Bogomolov-Fujiki form:
\begin{thm}\label{prop_branch_is_exceptional}
Let \(f \colon Y\dashrightarrow X\) be a Calabi-Yau Galois cover of a \hk{} manifold, and suppose that \(\Gal(f)\subset \Aut(Y)\). Then \(\Bran(f)\) is \(q_X\)-exceptional (see Definition \ref{def_exceptional}).
\end{thm}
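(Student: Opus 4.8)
The plan is to realize $X$ as a symplectic (crepant) birational contraction of the finite quotient $X_0 := Y/G$, where $G := \Gal(f) \subset \Aut(Y)$, and then to show that the contracted divisors are $q_X$-negative by a Hodge-index argument. First I would record the ambient geometry: since $G$ acts biregularly, $\pi \colon Y \to X_0$ is a genuine finite morphism and $f$ factors as $\beta \circ \pi$ for a birational map $\beta \colon X_0 \dashrightarrow X$; write $\mu := \beta^{-1}$. Pulling back the symplectic form $\sigma$ of $X$ through the resolution $\simf \colon Z \to X$ and using the birational invariance of holomorphic $2$-forms along $\tau$, one obtains $\eta \in H^0(Y, \Omega^2_Y)$ with $\tau^*\eta = \simf^*\sigma$, i.e. $\eta = f^*\sigma$. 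As $f \circ g = f$ for every $g \in G$, the form $\eta$ is $G$-invariant; and since $\eta^n$ (where $\dim X = 2n$) is a nonzero section of the trivial bundle $K_Y$, it is nowhere vanishing, so $\eta$ is a genuine holomorphic symplectic form. Hence $G$ acts symplectically, $X_0$ has symplectic (in particular canonical) singularities with $K_{X_0}=0$, and $\sigma$ descends to a reflexive symplectic form $\sigma_0$ on $X_0$ with $p^*\sigma = g^*\sigma_0$ on any common smooth resolution $p \colon W \to X$, $g \colon W \to X_0$.

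Next I would identify $\Bran(f)$ with $\mu$-exceptional divisors. Using $\Ram(\simf) = \Exc(\tau)$ and the relation $\mu \circ \simf = \pi \circ \tau$ (which holds since $f=\beta\circ\pi$), every essential component $D = \simf_* P$ satisfies $\mu(D) = \pi(\tau(P))$, of codimension $\geq 2$ in $X_0$ because $P$ is $\tau$-exceptional; so each component $D_1,\dots,D_k$ of $\Bran(f)$ is contracted by $\mu$. Moreover $\mu$ extracts no divisor: writing $K_W = \sum a_i E_i^{(p)} = \sum b_j E_j^{(g)}$ (both canonical bundles being trivial), the smoothness of $X$ forces $a_i \geq 1$ on every $p$-exceptional divisor, while $X_0$ canonical gives $b_j \geq 0$; if some $F \subset W$ were $p$-exceptional but dominated a divisor of $X_0$, it would have coefficient $a_F \geq 1$ from the $X$-side yet coefficient $0$ from the $X_0$-side of $K_W$, a contradiction. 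Thus every $p$-exceptional divisor is $g$-exceptional.

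With this in hand comes the orthogonality step, which is the heart of the matter. I would take an ample $\Q$-class $A'$ on $X_0$ and set $A := p_*(g^*A') \in \NS(X)_{\R}$, the proper transform of $A'$, which is big so $q_X(A) > 0$. Using Beauville's expression $q_X(\alpha,\beta) = c\int_X \alpha\,\beta\,(\sigma\bar\sigma)^{n-1}$ valid on $(1,1)$-classes (the mixed terms vanishing by Hodge type, $c>0$), the projection formula for $p$, the identity $p^*\sigma = g^*\sigma_0$, and then the projection formula for $g$, one computes
\[
q_X(A, D_i) = c\int_{X_0} A'\,(\sigma_0\bar\sigma_0)^{n-1}\cdot g_*(p^* D_i).
\]
Now $p^* D_i$ equals the strict transform $\widetilde{D_i}$ plus a $p$-exceptional correction; the strict transform is $g$-exceptional (as $D_i$ is $\mu$-exceptional) and the correction is $g$-exceptional by the no-extraction fact above, so $g_*(p^* D_i) = 0$ and hence $q_X(A, D_i) = 0$ for all $i$.

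Finally I would conclude by Hodge index. On the projective $X$ the form $q_X$ restricts to a signature $(1, \rho_X - 1)$ form on $\NS(X)_{\R}$; since $q_X(A) > 0$, the orthogonal complement $A^{\perp}$ is negative definite, and as all $D_i$ lie in $A^{\perp}$ the Gram matrix $\big(q_X(D_i, D_j)\big)$ is negative definite, i.e. $\Bran(f) = \sum_i m_i D_i$ is $q_X$-exceptional. The main obstacle is the orthogonality computation across the singular quotient $X_0$: it rests on the descent $p^*\sigma = g^*\sigma_0$ (so that $(\sigma\bar\sigma)^{n-1}$ is genuinely pulled back from $X_0$) together with the crepancy argument ruling out extracted divisors. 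It is precisely here that the hypotheses that $Y$ is Calabi-Yau and $\Gal(f) \subset \Aut(Y)$ enter, through the symplectic, hence canonical and crepant, nature of the quotient.
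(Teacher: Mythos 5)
Your route is genuinely different from the paper's. The paper takes $Z=Y/G$ as a small $\Q$-factorial compactification of $V/G$, observes that the induced birational map $X\dashrightarrow Z$ is $\Bran(f)$-strictly negative in the sense of Druel, and then combines \cite[Lemme 2.5]{Dru2011} with Boucksom's divisorial Zariski decomposition to show $\Bran(f)$ equals its own (automatically $q_X$-exceptional) negative part. Your replacement of that machinery by the symplectic-quotient and discrepancy analysis plus a Hodge-index argument is mostly sound: the $G$-invariance and nondegeneracy of $\eta=f^*\sigma$, the canonicity of $X_0=Y/G$ with $K_{X_0}=0$, the fact that every branch component is contracted by $\mu$, the no-extraction claim, and the orthogonality $q_X(A,D_i)=0$ all hold.

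The genuine gap is the sentence ``$A:=p_*(g^*A')$\dots is big so $q_X(A)>0$''. Bigness does \emph{not} imply positivity of the Beauville--Bogomolov square: if $H$ is ample and $E$ is a prime divisor with $q_X(E)<0$ (e.g.\ the Hilbert--Chow exceptional divisor on $S^{[2]}$, or indeed the very branch divisors you are studying), then $H+tE$ is big for every $t\geq 0$ while $q_X(H+tE)\to-\infty$. Since $q_X(A)>0$ is exactly what makes $A^{\perp}$ negative definite, the Hodge-index step collapses without it. The claim is true and repairable: the same projection-formula manipulation you use for the off-diagonal entries gives
\[q_X(A)=c\int_{X_0}(A')^2(\sigma_0\bar\sigma_0)^{n-1}=\frac{c}{|G|}\int_Y(\pi^*A')^2(\eta\wedge\bar\eta)^{n-1},\]
and the positivity of $\int_Y h^2(\eta\bar\eta)^{n-1}$ for $h$ ample and $\eta$ symplectic is a pointwise Hodge--Riemann computation (Beauville's argument for $q$ on K\"ahler classes); alternatively one can note that $A$ is big and \emph{movable} (the strict transform of a free system acquires no divisorial base component because $\mu^{-1}$ contracts no divisor) and invoke Boucksom's result that big movable classes have positive square---but that reintroduces the machinery the paper uses. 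A secondary omission: negative definiteness of the Gram matrix also requires the classes $[D_i]$ to be linearly independent in $\NS(X)_{\R}$; vectors lying in a negative definite subspace only yield a negative \emph{semi}-definite Gram matrix if they are dependent. Independence does hold here (the strict transforms $\widetilde{D}_i$ together with the $p$-exceptional divisors are all $g$-exceptional, so the negativity lemma on $W$ applies), but it must be stated.
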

This theorem uses crucially the theory of Zariski decomposition on \hk{} manifolds developed in \cite{Bou2004}, as well as an important result of \cite{Dru2011}.
Note that, unlike the smooth uniruled divisors appearing in the work of Markman (see for example \cite[page 11]{AneHuy2024}), the \(q_X\)-exceptional divisors obtained this way are typically singular. This is the case of the Chow divisors in both K3\(^{[n]}\) and Kum\(_n\). 

The assumption on \(\Gal(f)\) in Theorem  \ref{prop_branch_is_exceptional} is always satisfied for \(Y\) abelian. As a consequence, we improve the known lower bounds on the second Betti number of any \hk{} manifold satisfying Laza's condition (see Question \ref{quest_laza}):
\begin{cor}
    If a \hk{} manifold \(X\) has a deformation admitting an abelian Galois cover, then
    \[b_2(X)\geq 4.\]
    More precisely, if \(\phi \colon A\dashrightarrow X'\) is an abelian Galois cover for some deformation \(X'\) of \(X\) and if we denote by \(c\geq 1\) the number of distinct irreducible components of \(\Bran(\phi)\), then 
    \[b_2(X)\geq 3+c.\]
\end{cor}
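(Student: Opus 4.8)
The plan is to combine Theorem \ref{prop_branch_is_exceptional} with the signature of the Beauville--Bogomolov--Fujiki form. Since Betti numbers are invariant under smooth deformation, $b_2(X)=b_2(X')$, so it suffices to prove $b_2(X')\ge 3+c$ working entirely on the deformation $X'$ that carries the abelian Galois cover $\phi\colon A\dashrightarrow X'$. Thus the whole argument takes place on the single \hk{} manifold $X'$.

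First I would check that Theorem \ref{prop_branch_is_exceptional} applies to $\phi$. Its only hypothesis beyond $f$ being a Calabi-Yau Galois cover is that $\Gal(f)\subset\Aut(Y)$; here $Y=A$ is abelian, and any birational self-map of an abelian variety is a biregular automorphism (a rational map from a smooth variety to an abelian variety is automatically a morphism), so $\Gal(\phi)\subset\Bir(A)=\Aut(A)$. The theorem then gives that $\Bran(\phi)$ is $q_{X'}$-exceptional. By Definition \ref{def_exceptional}, this means the classes of the distinct irreducible components $D_1,\dots,D_c$ of $\Bran(\phi)$ span a subspace of $\NS(X')_{\mathbb{R}}\subset H^{1,1}(X',\mathbb{R})$ on which $q_{X'}$ is negative definite. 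In particular a negative definite Gram matrix forces these $c$ classes to be linearly independent, so the negative definite subspace they span has dimension exactly $c$.

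Now I invoke the standard fact that on an irreducible holomorphic symplectic manifold the form $q_{X'}$ has signature $(3,\,b_2(X')-3)$ on $H^2(X',\mathbb{R})$, the maximal positive subspace being spanned by $\operatorname{Re}\sigma$, $\operatorname{Im}\sigma$ and a Kähler class. Consequently every $q_{X'}$-negative definite subspace has dimension at most $b_2(X')-3$. Applying this to the $c$-dimensional negative definite span found above yields $c\le b_2(X')-3$, that is $b_2(X')\ge 3+c$, which is the refined bound. For the cruder inequality $b_2(X)\ge 4$ I must finally ensure $c\ge 1$: since $\phi$ is a nontrivial cover and \hk{} manifolds are simply connected, $\phi$ cannot be étale, so by purity of the branch locus it ramifies in codimension one, giving $\Bran(\phi)\ne 0$ and $c\ge 1$; combining gives $b_2(X)=b_2(X')\ge 3+c\ge 4$.

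Given that Theorem \ref{prop_branch_is_exceptional} is already in hand, the argument is short and its two delicate points are bookkeeping rather than substance: the verification that $\Gal(\phi)$ acts by honest automorphisms (so that the theorem is available), and the translation of ``$q_{X'}$-exceptional'' into a negative definite, hence genuinely $c$-dimensional, span of the branch components. Once these are settled the signature count is immediate, so I expect no serious obstacle, only the need to state the signature of $q_{X'}$ and the nonemptiness of the branch divisor carefully.
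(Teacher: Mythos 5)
Your proposal is correct and follows exactly the route the paper intends: the paper states this corollary as an immediate consequence of Theorem \ref{prop_branch_is_exceptional} (noting that the hypothesis $\Gal(f)\subset\Aut(Y)$ is automatic for $Y$ abelian), and the only remaining content is the signature count $(3,b_2-3)$ for $q_{X'}$ bounding the dimension of the negative definite span of the branch components, which you carry out correctly. Your added justifications --- that birational self-maps of abelian varieties are automorphisms, and that $c\ge 1$ via simple connectedness and purity --- match the paper's remarks and the standing assumption $c\ge1$ in the statement.
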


\subsection{Overview}

In Section \ref{sec_HK_cover_bounds} we relate the cohomologies of a \hk{} manifold and its rational Calabi-Yau covers via Hodge theory.

In Section \ref{sec_CY_type} we use the cohomological restrictions from the previous section and the Beauville-Bogomolov decomposition theorem to reduce Question \ref{quest_Galoiscover} to those cases where \(Y\) is either \hk{} or abelian. We then specialize to the dimension \(4\) case, where we prove Theorem \ref{thm_HK2_rho=1}: a crucial tool we employ is the main result of \cite{JiaLiu2024}, which leverages strong topological restrictions for \hk{} manifolds (\cite{Gua2001}) and their singular counterparts (\cite{FuMen2021}).

In Section \ref{sec_abelian_bounds} we prove Theorem \ref{thm_vosinmap_abeliancover}. The main ingredient is the computation of the monodromy group of the Voisin map, made in \cite{GioGio2024}.

In Section \ref{sec_abelian_fibrations} we recall the construction mentioned in Example \ref{eg_abelian_fibrations}. Inspired by \cite[Appendix A]{KLM2023}, this provides counterexamples for Question \ref{quest_birat_Galois_factors}.

In Section \ref{sec_uniruled_abovebranch} we prove Proposition \ref{prop_uniruled_above_branch} and Corollary \ref{cor_uniruled_LLSvS}.  

In Section \ref{sec_exceptional_branch}, we prove Theorem \ref{prop_branch_is_exceptional}, in slightly greater generality than how it is stated above. 

\subsection{Conventions}
Throughout the text, algebraic variety means an integral scheme of finite type and separated over \(\C\).

A projective \textit{strict Calabi-Yau} (or \textit{CY}) \textit{manifold} is a projective Calabi-Yau manifold of dimension \(\geq3\) with \[h^{p,0}(X)=0 \ \ \textrm{ for all } p=1,\dots \dim X -1 .\]

A \textit{projective \hk} manifold is a projective simply connected complex manifold \(X\) which admits a nowhere vanishing holomorphic 2-form \(\sigma\) such that\[\C \, \sigma= H^{2,0}(X).\]

\subsection*{Acknowledgements}
I would first like to thank Claire Voisin for many invaluable conversations and for sharing original ideas that motivated and shaped this work. I am also grateful to Emanuele Macrì for his mentorship and his precious advice. I would like to thank Radu Laza for his fascinating talk in Cetraro, September 2024, during which he posed some of the questions discussed here. I am particularly indebted towards Franco Giovenzana for sharing his experience and knowledge, as well as Francesco Denisi and Ángel David Ríos Ortiz for many enriching conversations. I finally thank Lisa Marquand and Moritz Hartlieb for spotting mistakes in earlier versions. The author benefited from the support of ERC Synergy Grant 854361 HyperK.
 
\section{Proof of Theorem \ref{thm_HK2_rho=1}}
We prove in this section Theorem \ref{thm_HK2_rho=1}. The proof will be divided in two parts.
\subsection{Topological bounds on CY rational covers of HK manifolds}\label{sec_HK_cover_bounds}

We recall an important object for studying the cohomology of smooth projective varieties related by rational maps: for any \(\Z\)-Hodge structure \(H\) of weight 2 and \(H^{p,q}=0\) for \(|p-q|>2\), the \textit{transcendental lattice} \[T(H)\subset H\] is defined as the smallest primitive \(\Z\)-Hodge substructure such that \[H^{2,0}\subset T(H)\otimes \C.\] 
\begin{df}
Let \(X\) a smooth projective manifold. We define its transcendental lattice as
\[T(X):=T(H^2(X,\Z)).\]
\end{df}
This sub-Hodge structure enjoys nice properties when it comes from K3-type polarized Hodge structures.
\begin{prop}\label{prop_T_properties}
    Let \(X\) be a projective \hk{} manifold.
    Then the Hodge structure \(T(X)\) is polarized and irreducible, where the polarization is induced by the Beauville-Bogomolov form \(q_X\). Moreover,
    \[T(X)=\NS(X)^{\perp_{q_X}}.\]
\end{prop}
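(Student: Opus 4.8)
The plan is to deduce all three assertions from the standard properties of the Beauville--Bogomolov form \(q_X\) on \(H^2(X,\Z)\): that \(q_X\) is non-degenerate of signature \((3,b_2-3)\), that the associated pairing is a morphism of Hodge structures \(\Sym^2 H^2(X,\Q)\to\Q(-2)\) (equivalently \(q_X\) is a Hodge class), and that for a generator \(\sigma\) of \(H^{2,0}(X)\) one has \(q_X(\sigma,\sigma)=0\) and \(q_X(\sigma,\bar\sigma)>0\). I would first establish the equality \(T(X)=\NS(X)^{\perp_{q_X}}\), then read off the polarization from the Hodge--Riemann relations, and finally prove irreducibility using that \(H^{2,0}(X)\) is one-dimensional.

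For the identification, since the pairing \(q_X\) is a morphism of Hodge structures it pairs \(H^{p,q}\) non-trivially only with \(H^{q,p}\); in particular \(q_X(H^{2,0},H^{1,1})=0\), so \(H^{2,0}\subset \NS(X)^{\perp_{q_X}}\otimes\C\). As an orthogonal complement with respect to a morphism of Hodge structures, \(\NS(X)^{\perp_{q_X}}\) is itself a primitive sub-Hodge structure containing \(H^{2,0}\), so the minimality in the definition of \(T(X)\) gives \(T(X)\subseteq\NS(X)^{\perp_{q_X}}\). For the reverse inclusion, note that \(T(X)\) is defined over \(\Q\) and hence stable under conjugation, so \(T(X)\otimes\C\supseteq H^{2,0}\oplus H^{0,2}\); the orthogonality of Hodge types then forces \(T(X)^{\perp_{q_X}}\otimes\C\subseteq (H^{2,0}\oplus H^{0,2})^{\perp}=H^{1,1}\), whence \(T(X)^{\perp_{q_X}}\subseteq\NS(X)_\Q\). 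Taking orthogonal complements and using the non-degeneracy of \(q_X\) over \(\Q\) yields \(\NS(X)_\Q^{\perp}\subseteq T(X)_\Q\), so \(T(X)_\Q=\NS(X)_\Q^{\perp}\); as both sides are primitive sublattices of \(H^2(X,\Z)\) they agree integrally.

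For the polarization, the first Hodge--Riemann relation is exactly the orthogonality of Hodge types used above, and for the second I would check signs on the three pieces of \(T(X)\otimes\C\): on \(H^{2,0}\) positivity is the relation \(q_X(\sigma,\bar\sigma)>0\), while on the real transcendental part \(T(X)_\R\cap H^{1,1}_\R\) one needs \(q_X\) to be negative definite. I expect this last point to be the main obstacle, and it is where projectivity of \(X\) genuinely enters: by the Hodge index theorem the restriction of \(q_X\) to \(\NS(X)_\R\) has signature \((1,\rho_X-1)\) (an ample class provides the positive direction), and since \(H^{2,0}\oplus H^{0,2}\) accounts for two positive directions, \(q_X\) has signature \((1,b_2-3)\) on \(H^{1,1}_\R\). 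Thus the unique positive direction already lies in \(\NS(X)_\R\), forcing \(q_X\) to be negative definite on the orthogonal complement \(T(X)_\R\cap H^{1,1}_\R\). Up to the standard overall sign, \(q_X\) therefore satisfies the Hodge--Riemann bilinear relations and polarizes \(T(X)\).

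Finally, for irreducibility let \(0\neq T'\subseteq T(X)_\Q\) be a sub-Hodge structure. Since \(\dim H^{2,0}(X)=1\), the component \((T')^{2,0}\) is either \(0\) or all of \(H^{2,0}\). If \((T')^{2,0}=H^{2,0}\), then the saturation of \(T'\) is a primitive sub-Hodge structure containing \(H^{2,0}\) and contained in \(T(X)\), so minimality gives \(T'_\Q=T(X)_\Q\). If instead \((T')^{2,0}=0\), then also \((T')^{0,2}=0\), so \(T'\otimes\C\subseteq H^{1,1}\) and \(T'\subseteq\NS(X)_\Q\); combined with \(T'\subseteq T(X)_\Q=\NS(X)_\Q^{\perp}\) and the non-degeneracy of \(q_X\) on \(\NS(X)_\Q\) established above, this forces \(T'=0\), a contradiction. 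Hence \(T(X)_\Q\) admits no proper nonzero sub-Hodge structure, completing the proof.
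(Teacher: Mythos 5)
Your proof is correct and is essentially the standard argument: the paper itself simply cites \cite{Huy2016} (Lemmas 2.7 and 3.1, stated for K3 surfaces, with the remark that the proof carries over upon replacing the intersection form by \(q_X\)), and your write-up supplies exactly the details behind that citation — orthogonality of Hodge types under \(q_X\), the signature count giving \((1,h^{1,1}-1)\) on \(H^{1,1}_\R\) and hence negative definiteness on the transcendental \((1,1)\)-part, and minimality of \(T(X)\) for both the identification with \(\NS(X)^{\perp_{q_X}}\) and irreducibility. No gaps.
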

\begin{proof}
    The first claim follows from the definition of \hk{} manifold and \cite[Lemma 2.7]{Huy2016}. The second claim is proven in the \(\dim X =2\) case in \cite[Lemma 3.1]{Huy2016}. However, the same proof goes through just as well in any dimension, if one replaces the intersection form with the form \(q_X\).
\end{proof}
We denote the rational cohomology classes spanned by the transcendental lattice as \[T(X)_\Q := T(X)\otimes \Q \subset H^2(X,\Q).\]
This is a \(\Q\)-irreducible Hodge structure.

For any \(\Q\)-vector subspace \(V\) of some graded \(\Q\)-algebra \(R^*\), we denote by \(S^*V\) the subalgebra generated by \(V\).
The following statement is classical: we provide a proof for completeness.

Let \(X,Y\) be smooth projective complex varieties of dimension \(2n\) and \(\phi \colon Y\dashrightarrow X\) be a dominant rational map. Then one defines a morphism of \(\Q\)-Hodge structures 
\[\phi^* \colon H^k(X,\Q) \rightarrow H^k(Y,\Q)\]
as follows.
Fix a resolution of indeterminacies 
\begin{equation}\label{eq_resolution}
\begin{tikzcd}
	& {\widetilde{Y}} \\
	Y && X
	\arrow["\tau"', from=1-2, to=2-1]
	\arrow["{\widetilde{\phi}}", from=1-2, to=2-3]
	\arrow["\phi", dashed, from=2-1, to=2-3]
\end{tikzcd}    
\end{equation}
Then we set
\[\phi^*:=\tau_*\widetilde{\phi}^* \]
where \(\tau_*\) is the Gysin morphism. 

\begin{prop}\label{prop_inj_subalgebras}

Let \(X,Y\) be smooth projective complex varieties of dimension \(2n\) and \(\phi \colon Y\dashrightarrow X\) be a dominant rational map.
     Then the morphism \(\phi^*\) induces by restriction a morphism
    \begin{equation}\label{eq_inj_subalgebras}
    S^*T(X)_\Q \xrightarrow{\phi^*} H^*(Y,\Q)
\end{equation}
which is injective and compatible with cup-product.

\end{prop}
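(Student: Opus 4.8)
The plan is to reduce the statement to two soft facts together with one Hodge-theoretic input. The two soft facts are: first, that $\widetilde{\phi}^*\colon H^*(X,\Q)\to H^*(\widetilde{Y},\Q)$ is an injective homomorphism of graded $\Q$-algebras — injectivity because $\widetilde{\phi}$ is a generically finite surjective morphism of smooth projective $2n$-folds, so $\widetilde{\phi}_*\widetilde{\phi}^*=(\deg\phi)\cdot\id$ with $\deg\phi\neq0$, and multiplicativity because it is an honest pullback; and second, that $\tau$ is birational, so $\tau^*$ is a ring homomorphism and $\tau_*\tau^*=\id$ on $H^*(Y,\Q)$. Since $\phi^*=\tau_*\widetilde{\phi}^*$ and $\tau_*$ is not multiplicative, the whole content of the proposition is that on transcendental classes the defect between $\phi^*$ and $\widetilde{\phi}^*$ disappears.

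The crux will be the claim that $\widetilde{\phi}^*(T(X)_\Q)\subseteq\tau^*H^2(Y,\Q)$, equivalently $\widetilde{\phi}^*\alpha=\tau^*\phi^*\alpha$ for all $\alpha\in T(X)_\Q$. To prove it I would pass to the quotient Hodge structure $Q:=H^2(\widetilde{Y},\Q)/\tau^*H^2(Y,\Q)$, which makes sense because $\tau^*$ is an injective morphism of Hodge structures. The key geometric observation is that holomorphic $2$-forms are a birational invariant, so $\tau^*\colon H^{2,0}(Y)\to H^{2,0}(\widetilde{Y})$ is an isomorphism; hence $Q^{2,0}=0$, and by conjugation $Q$ is purely of type $(1,1)$. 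Now the composite $T(X)_\Q\xrightarrow{\widetilde{\phi}^*}H^2(\widetilde{Y},\Q)\twoheadrightarrow Q$ is a morphism of Hodge structures whose image, being a sub-Hodge structure of $Q$, has trivial $(2,0)$-part; therefore $H^{2,0}(X)=T(X)^{2,0}$ lies in the complexified kernel. By the minimality built into the definition of the transcendental lattice (which passes to $\Q$-coefficients), any $\Q$-sub-Hodge structure of $T(X)_\Q$ whose complexification contains $H^{2,0}(X)$ equals $T(X)_\Q$, so the composite is zero and $\widetilde{\phi}^*\alpha\in\tau^*H^2(Y,\Q)$. Writing $\widetilde{\phi}^*\alpha=\tau^*\gamma$ and applying $\tau_*$ gives $\gamma=\tau_*\widetilde{\phi}^*\alpha=\phi^*\alpha$, which is the claim.

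Granting this, both conclusions follow formally. For $\alpha_1,\dots,\alpha_k\in T(X)_\Q$ the claim together with the ring-homomorphism property of $\widetilde{\phi}^*$ and $\tau^*$ gives
\[
\widetilde{\phi}^*(\alpha_1\cup\cdots\cup\alpha_k)=\tau^*\phi^*\alpha_1\cup\cdots\cup\tau^*\phi^*\alpha_k=\tau^*\!\big(\phi^*\alpha_1\cup\cdots\cup\phi^*\alpha_k\big).
\]
Applying $\tau_*$ and using $\tau_*\tau^*=\id$ then yields simultaneously the multiplicativity $\phi^*(\alpha_1\cup\cdots\cup\alpha_k)=\phi^*\alpha_1\cup\cdots\cup\phi^*\alpha_k$ (extending bilinearly over the generators this is cup-product compatibility on $S^*T(X)_\Q$, with $\phi^*1=1$ in degree $0$) and the identity $\widetilde{\phi}^*x=\tau^*\phi^*x$ for every $x\in S^*T(X)_\Q$. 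Injectivity is then immediate: if $\phi^*x=0$ then $\widetilde{\phi}^*x=\tau^*0=0$, whence $x=0$ because $\widetilde{\phi}^*$ is injective.

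I expect the only genuine obstacle to be the crux claim — more precisely, ruling out that $\widetilde{\phi}^*$ carries a transcendental class into the exceptional part of $H^2(\widetilde{Y},\Q)$, the part annihilated by $\tau_*$. This is exactly where both the minimality in the definition of $T(X)$ and the birational invariance of $H^{2,0}$ are indispensable, and it also explains why the statement must be confined to the transcendental subalgebra: on algebraic classes $\tau_*$ really does kill exceptional contributions, so neither injectivity nor multiplicativity of $\phi^*$ can be expected there.
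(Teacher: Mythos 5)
Your proposal is correct and follows essentially the same route as the paper: the crux in both is the identity $\widetilde{\phi}^*\alpha=\tau^*\phi^*\alpha$ on $T(X)_\Q$, proved by the same minimality argument for the transcendental lattice, after which multiplicativity and injectivity follow formally from the injective ring homomorphisms $\tau^*$ and $\widetilde{\phi}^*$. The only (cosmetic) difference is that the paper kills the exceptional contribution via the blow-up decomposition $H^2(\widetilde{Y},\Q)=\tau^*H^2(Y,\Q)\oplus N$ with $N$ of trivial Hodge type, whereas you work with the quotient Hodge structure and the birational invariance of $H^{2,0}$.
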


\begin{proof}
We use the notation from diagram (\ref{eq_resolution}).
The proposition will be a consequence of the following:
\begin{lem}\label{lem_T(X)_inclusion}
    For any \(\alpha \in T(X)_\Q\), we have
    \begin{equation}\label{eq_alpha_identity}
        \simvarphi^* \alpha = \tau^* \varphi^* \alpha \ \ \ \textup{in }H^2(\simY,\Q).
    \end{equation}
\end{lem}
\begin{proof}
    By the blow-up formula, we have a decomposition as Hodge structures
    \[H^2(\simY,\Q)=\tau^* H^2(Y,\Q)\oplus N\]
    where \(N\) consists only of divisor classes.
    Let \(\pi_N\) be the projection on the second term.
    The composition 
    \[\pi_N \circ \simvarphi^* \colon T(X)_\Q \rightarrow N\]
    sends \(H^{2,0}(X)\subset T(X)\otimes \C\) to \(0\) since the Hodge structure on \(N\) is trivial. Being a Hodge substructure which contains \(H^{2,0}(X)\), the kernel of \(\pi_N \circ \simvarphi^*\) must then coincide with \(T(X)_\Q\) by minimality of the Hodge structure \(T(X)_\Q\). Hence \(\pi_N\circ \simvarphi^*=0\).
    This implies that \[\simvarphi^* T(X)_\Q \subset \tau^*H^2(Y,\Q) \ \ \ \textup{in }H^2(\simY , \Q).\]

    We conclude by the projection formula \(\tau_* \circ \tau^*= \id_{H^2(Y,\Q)}\).
\end{proof}
Since both \(\simvarphi^* \colon H^*(X,\Q)\rightarrow H^*(\widetilde{Y},\Q)\) and \(\tau^* \colon H^*(Y,\Q)\rightarrow H^*(\widetilde{Y},\Q)\)
are compatible with cup products, formula (\ref{eq_alpha_identity}) provides for any \(\alpha \in S^*T(X)_\Q\)
\begin{equation}\label{eq_truc2}
        \simvarphi^* \alpha = \tau^* \varphi^* \alpha^* \ \ \ \textup{in }H^{2*}(Y,\Q).
    \end{equation}
    
As \(\tau^*\) is injective, (\ref{eq_truc2}) implies that \(\varphi^* \colon S^*T(X)_\Q \rightarrow H^*(Y,\Q)\) respects cup products.
As \(\simvarphi^* \colon H^*(X,\Q)\rightarrow H^*(\widetilde{Y},\Q)\) is injective, because \(\simvarphi\) is a surjective morphism, it follows that \(\varphi^* \colon S^*T(X)_\Q \rightarrow H^*(Y,\Q)\) is also injective.

\end{proof}
\begin{cor}\label{cor_betti_bounds}
With the same hypothesis as Proposition \ref{prop_inj_subalgebras}, for any \(k\leq n\) we have
\[b_{2k}(Y)\geq \binom {b_2(X)-\rho_X+k-1}{k} , \]
\end{cor}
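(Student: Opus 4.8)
The plan is to combine Proposition \ref{prop_inj_subalgebras} with a dimension count of the degree-$k$ graded piece of the subalgebra $S^*T(X)_\Q$. Since $\phi^*$ restricts to an injection $S^*T(X)_\Q \hookrightarrow H^*(Y,\Q)$ that preserves the grading (it is built from morphisms of Hodge structures that multiply degree by the obvious factor, sending $H^{2j}(X,\Q)$ into $H^{2j}(Y,\Q)$), the image of the degree-$2k$ piece $S^kT(X)_\Q$ lands inside $H^{2k}(Y,\Q)$. Injectivity then forces
\[
b_{2k}(Y) = \dim_\Q H^{2k}(Y,\Q) \ \geq\ \dim_\Q S^k T(X)_\Q.
\]
So the whole statement reduces to bounding $\dim_\Q S^k T(X)_\Q$ from below by the stated binomial coefficient.

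The key observation for this lower bound is that $T(X)_\Q \subset H^2(X,\Q)$ has dimension $b_2(X)-\rho_X$: indeed, by Proposition \ref{prop_T_properties} we have $T(X) = \NS(X)^{\perp_{q_X}}$, so $T(X)_\Q$ is the $q_X$-orthogonal complement of $\NS(X)_\Q$, whence $\dim_\Q T(X)_\Q = b_2(X) - \rho_X$. Writing $m := b_2(X)-\rho_X$, the algebra $S^*T(X)_\Q$ is the image in $H^*(Y,\Q)$ of the symmetric algebra on an $m$-dimensional vector space; the degree-$k$ part of the full symmetric algebra $\Sym^k(T(X)_\Q)$ has dimension $\binom{m+k-1}{k}$, which is exactly the claimed bound. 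I would therefore argue that the natural surjection $\Sym^k(T(X)_\Q) \twoheadrightarrow S^k T(X)_\Q$ is in fact an isomorphism in the relevant range $k \le n$, so that no cup-product relations collapse the dimension below $\binom{m+k-1}{k}$.

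The main obstacle is precisely justifying that $\Sym^k(T(X)_\Q) \to S^k T(X)_\Q$ is injective (equivalently, that the cup-product map has no kernel) for $k\le n$. Here is where the hyper-Kähler structure enters decisively. By a theorem of Verbitsky on the structure of the cohomology algebra of a hyper-Kähler manifold — specifically, that the subalgebra of $H^*(X,\Q)$ generated by $H^2(X,\Q)$ is isomorphic to $\Sym^* H^2(X,\Q)$ truncated only in degrees above $n$ — the cup-product map $\Sym^k H^2(X,\Q) \to H^{2k}(X,\Q)$ is injective for all $k \le n$. Restricting to the subspace $T(X)_\Q$, the composite
\[
\Sym^k T(X)_\Q \hookrightarrow \Sym^k H^2(X,\Q) \xrightarrow{\ \smile\ } H^{2k}(X,\Q)
\]
is injective for $k \le n$, which gives exactly $\dim_\Q S^k T(X)_\Q = \binom{m+k-1}{k}$ inside $H^{2k}(X,\Q)$, and Proposition \ref{prop_inj_subalgebras} transports this lower bound to $b_{2k}(Y)$.

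I would close by assembling these steps in order: first record $\dim_\Q T(X)_\Q = b_2(X)-\rho_X$ from Proposition \ref{prop_T_properties}; second invoke Verbitsky's injectivity to compute $\dim_\Q S^k T(X)_\Q = \binom{b_2(X)-\rho_X+k-1}{k}$ for $k\le n$; and finally apply the injectivity in Proposition \ref{prop_inj_subalgebras} to conclude $b_{2k}(Y) \ge \dim_\Q S^k T(X)_\Q$. One delicate point to verify is that Verbitsky's theorem applies to $X$: the corollary's hypotheses are inherited from Proposition \ref{prop_inj_subalgebras}, so I would need $X$ itself to be hyper-Kähler (as in the surrounding setup) rather than merely an arbitrary smooth projective variety of dimension $2n$, and I would make that assumption explicit if it is not already in force.
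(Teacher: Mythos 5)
Your proposal is correct and follows essentially the same route as the paper: the paper's proof is exactly the combination of Verbitsky's isomorphism \(S^kT(X)_\Q\simeq\Sym^k T(X)_\Q\) with the injectivity from Proposition \ref{prop_inj_subalgebras}, and you supply the dimension count \(\dim_\Q T(X)_\Q=b_2(X)-\rho_X\) that the paper leaves implicit. Your closing caveat is well taken — the corollary as stated inherits only the hypotheses of Proposition \ref{prop_inj_subalgebras}, yet both the Verbitsky input and Proposition \ref{prop_T_properties} require \(X\) to be hyper-K\"ahler, an assumption the paper also uses tacitly.
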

\begin{proof}
By work of Verbitsky (see \cite{Bog1996}), we know that 
\begin{equation}\label{eq_verbitsky_relations}
    S^kT(X)_\Q\simeq\Sym^k T(X)_\Q
\end{equation}
The conclusion follows from Proposition \ref{prop_inj_subalgebras}.

\end{proof}

\subsection{CY covers of HK fourfolds}\label{sec_CY_type}

Let \(Y\dashrightarrow X\) be a CY rational cover. 
By the Beauville-Bogomolov decomposition theorem, there exists a finite étale cover 
\begin{equation}\label{eq_BB_decomp}
    A\times C\times H \rightarrow Y
\end{equation}
with \(A\) abelian, \(C\) a product of strict Calabi-Yau manifolds and \(H\) a product of \hk{} manifolds. 
Note that the Galois closure of the map (\ref{eq_BB_decomp}) is obtained by taking a finite étale cover of the abelian part, which yields another cover of the form \(A'\times C\times H\).

It follows that, in order to classify all CY Galois covers \(Y\dashrightarrow X\), we need only to classify Galois covers of the form \(A\times C\times H\dashrightarrow X\) and the normal subgroups \[H\lhd \Aut(A\times C\times H / X)\subset \Gal(A\times C\times  H/X)\]
that act freely on \(A\times C\times H\).

We assume now that \(X\) carries a holomorphic symplectic 2-form.
It then follows that \(H^{2,0}(A\times C\times H)\) contains a nowhere degenerate holomorphic 2-form, which implies that \(C\) is reduced to a point, since \(H^{2,0}(C)=H^{1,0}(C)=0\). 
Let us denote by \(e\) the dimension of \(A\) and let us write
\[H=H_1\times \cdots \times H_r\]
with each \(H_i\) a \hk{} manifold of dimension \(2k_i\). 

At the extreme case where \((e,r)=(0,1)\), we have that \(Y\) admits a finite étale cover from a \hk{} manifold: by multiplicativity of the holomorphic Euler characteristic and the fact \(h^{2,0}(Y)\neq 0\), the degree of \(H\rightarrow Y\) is one (see for example \cite[Proposition 2.6]{OguSch2011}) hence 
\[(e,r)=(0,1) \Rightarrow Y \textup{ is \hk}.\]


\begin{df}\label{def_CY_type}
Let \(X\) be a Calabi-Yau manifold with \(h^{2,0}(X)\neq 0\).
We say that \(Y\dashrightarrow X\) is a CY rational cover \textit{of type }\((e,r)\) if there exists a finite étale morphism
\begin{equation}\label{eq_BB_over_HK}
    A\times H_1\times \cdots \times H_r \rightarrow Y
\end{equation}
for some abelian variety \(A\) of dimension \(e\) and some \hk{} manifolds \(H_i\).
\end{df}
\noindent By the uniqueness of Beauville-Bogomolov decomposition, this is well defined. 

Let us now specialize the above discussion to the case of \hk{} fourfolds with high second Betti number.

\begin{prop}\label{prop_type_over_HK4fold}
    Let \(Y\dashrightarrow X\) be a CY rational cover of type \((e,r)\) with \(X\) a \hk{} manifold of dimension \(4\) and \(b_2(X)=23\). 
    \begin{enumerate}
        \item If \(\rho_X\leq 11\), then  \[(e,r)\in \{(2,1),(0,2),(0,1)\}\]. 
        \item If \(\rho_X \leq 7 \), then \(e=0\).
        \item \label{item_second_last} If \(\rho_X=1\), then \((e,r)=(0,1)\) and \(Y\) is \hk.
        \item \label{item_last} If \(r=2\) then \(Y\) is isomorphic to the product of two K3 surfaces.
    \end{enumerate}

\end{prop}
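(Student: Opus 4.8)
The plan is to first enumerate the possible types. Since $\dim Y=\dim X=4$ and each $\hk$ factor $H_i$ has dimension $2k_i\ge 2$, the étale cover $\widehat{Y}:=A\times H_1\times\cdots\times H_r$ from Definition \ref{def_CY_type} satisfies $e+\sum_i 2k_i=4$; listing the solutions with $e\ge 0$ and $k_i\ge 1$ shows that the only possibilities are $(4,0)$, $(2,1)$, $(0,2)$ and $(0,1)$. In type $(4,0)$ one has $\widehat{Y}=A$ an abelian fourfold; in $(2,1)$, $\widehat{Y}=A_2\times S$ with $A_2$ an abelian surface and $S$ a K3; in $(0,2)$, $\widehat{Y}=S_1\times S_2$ a product of two K3 surfaces; and in $(0,1)$ the cover $\widehat{Y}$ is a single $\hk$ fourfold.

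The main engine is Corollary \ref{cor_betti_bounds} applied to the composed CY rational cover $\widehat{Y}\dashrightarrow X$, which gives, writing $t:=b_2(X)-\rho_X=23-\rho_X$,
\[ b_2(\widehat{Y})\ge t,\qquad b_4(\widehat{Y})\ge \tbinom{t+1}{2}. \]
I would then compute the even Betti numbers of $\widehat{Y}$ by Künneth in each type: $b_4=70$ for $(4,0)$, $b_4=134$ for $(2,1)$, and $b_4=486$ for $(0,2)$ (the $b_2$ bound is never the binding constraint). Comparing with the lower bound on $b_4$ gives numerical thresholds: type $(4,0)$ forces $\binom{t+1}{2}\le 70$, i.e. $t\le 11$, i.e. $\rho_X\ge 12$; type $(2,1)$ forces $\binom{t+1}{2}\le 134$, i.e. $t\le 15$, i.e. $\rho_X\ge 8$. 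This yields (1), ruling out $(4,0)$ when $\rho_X\le 11$, and (2), additionally ruling out $(2,1)$ when $\rho_X\le 7$ so that $e=0$.

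For (3), with $\rho_X=1$ we have $t=22$, so the Betti bounds already exclude $(4,0)$ and $(2,1)$; the remaining task is to exclude $(0,2)$, and this is the step I expect to be the main obstacle, since the Betti numbers of $S_1\times S_2$ are far too large to give a contradiction. Here I would invoke the Hodge-theoretic input: by Proposition \ref{prop_inj_subalgebras} the irreducible Hodge structure $T(X)_\Q$ (of rank $22$ with $h^{2,0}=1$, by Proposition \ref{prop_T_properties}) embeds into $H^2(\widehat{Y},\Q)=H^2(S_1,\Q)\oplus H^2(S_2,\Q)$. Composing with the two projections and using irreducibility of $T(X)_\Q$, each composite is zero or injective, and they cannot both vanish; so I obtain an injection $T(X)_\Q\hookrightarrow H^2(S_j,\Q)$ for some $j$ whose image is irreducible with nonzero $(2,0)$-part, hence equals $T(S_j)_\Q$. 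But then $22=\rk T(X)_\Q=\rk T(S_j)=22-\rho_{S_j}\le 21$, contradicting $\rho_{S_j}\ge 1$ for a projective K3. This leaves only $(e,r)=(0,1)$, whence $Y$ is $\hk$ by the Euler-characteristic observation already recorded in the text.

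Finally, for (4), if $r=2$ then necessarily $(e,r)=(0,2)$ and $\widehat{Y}=S_1\times S_2$, with $\widehat{Y}\to Y$ finite étale of some degree $d$. I would conclude $d=1$ via multiplicativity of the holomorphic Euler characteristic: $\chi(\Ocal_{\widehat{Y}})=4=d\cdot\chi(\Ocal_Y)$, while the injections $H^{p,0}(Y)\hookrightarrow H^{p,0}(\widehat{Y})$ give $h^{1,0}(Y)=h^{3,0}(Y)=0$, together with $h^{4,0}(Y)=1$ (trivial canonical bundle) and $h^{2,0}(Y)\ge 1$ (pullback of the symplectic form, nonzero by Proposition \ref{prop_inj_subalgebras}). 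Hence $\chi(\Ocal_Y)=2+h^{2,0}(Y)\ge 3$, forcing $d\le 4/3$ and therefore $d=1$, so that $Y\cong S_1\times S_2$.
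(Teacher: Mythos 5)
Your proposal is correct, and for items (1), (2) and (4) it is essentially the paper's argument: apply Corollary \ref{cor_betti_bounds} with $k=2$ to the composed cover $A\times H_1\times\cdots\times H_r\dashrightarrow X$, compare $\binom{24-\rho_X}{2}$ with $b_4=70$ (abelian fourfold) and $b_4=134$ (abelian surface times K3), and settle (4) by multiplicativity of $\chi(\Ocal)$ exactly as in the text. The one place where you genuinely add something is item (3): the paper declares (1)--(3) to be ``immediate consequences'' of Corollary \ref{cor_betti_bounds} and the remark before Definition \ref{def_CY_type}, but, as you rightly observe, the Betti bound $b_4\geq\binom{23}{2}=253$ does not rule out type $(0,2)$, since $b_4(S_1\times S_2)=486$. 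Your supplementary argument --- $T(X)_\Q$ is $\Q$-irreducible of rank $22$ when $\rho_X=1$, it injects into $H^2(S_1,\Q)\oplus H^2(S_2,\Q)$ by Proposition \ref{prop_inj_subalgebras}, irreducibility forces it into a single summand where it must coincide with $T(S_j)_\Q$, and $\rk T(S_j)=22-\rho_{S_j}\leq 21$ for a projective K3 --- is exactly the missing step, and it is consistent with the paper's own observation that type $(0,2)$ covers do occur once $\rho_X\geq 2$ (e.g.\ $S\times S\dashrightarrow S^{[2]}$). So your write-up is not just a valid alternative; it makes explicit a reduction that the paper's proof leaves implicit.
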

\begin{proof}
Items \textit{(1),(2)} and \textit{(3)} are immediate consequences of Corollary \ref{cor_betti_bounds} and the remark just before Definition \ref{def_CY_type}. For the claim \textit{(\ref{item_last})}, let 
\(H_1\times H_2 \xrightarrow{ \ f \ } Y\) be a finite étale map with both \(H_i \) being K3 surfaces.
We have \(h^{3,0}(Y)=0\) since \(b_3(H_1\times H_2)=0\), hence 
\[\chi(\Ocal_Y)=2+h^{2,0}(Y)\geq 3.\]
But \(\deg (f) \chi(\Ocal_Y)=\chi(H_1\times H_2)=4 \), so \(f\) is an isomorphism.
\end{proof}

Item \textit{(\ref{item_second_last})} is optimal in the sense that there exist CY rational covers of type \((0,2)\) if \(\rho_X>1\): indeed, for any K3 surface, \(S\) one has a rational (even Galois) map
\[S\times S \dashrightarrow S^{[2]},\]
and as soon as \(\rho_S=1\) one has \(\rho_{S^{[2]}}=2\). 
It would be interesting to establish if the other bullet points are optimal too.

We can now prove our first main result:
\begin{proof}[Proof of Theorem \ref{thm_HK2_rho=1}]

By Lemma \ref{prop_type_over_HK4fold}, \(Y\) is a \hk{} manifold of dimension 4. By Corollary \ref{cor_betti_bounds} for \(k=1\) and \cite{Gua2001}, \(Y\) has the same Betti numbers as \(X\), i.e., 
\[b_2(Y)=23, \ b_3(Y)=0, \ b_4(Y)=278.\]

The injective morphism of Hodge structure \(\phi^* \colon T(X)_\Q \rightarrow H^2(Y,\Q)\) is an isomorphism onto \(T(Y)_\Q\), as both Hodge structures are irreducible and \(\phi^* (T(X)_\Q)\) contains a nonzero class, namely \(\phi^*\simga_X=\sigma_Y\). Since \(\rho_X=1\) and \(b_2(Y)=23=b_2(X)\), this implies \(\rho_Y=1\), so \(\NS(Y)\) is spanned by an ample class we denote by \(\omega_Y\). If \(g\in \Bir(\phi)\), then the action of \(\phi^*\) on \(H^2(Y,\Q)\) fixes \(T(Y)_\Q\) and stabilizes \(\NS(Y)_\Q\), so \(\phi^* \omega =\lambda \omega\) for some nonzero integer \(\lambda\). 
We have \(\lambda \in \Z^\times \) since \(g^*\) is an automorphism of \(H^2(Y,\Z)\), and \(\lambda > 0\) since \(g^*\) sends effective divisors to effective divisors. We conclude that \(\lambda =1\), so \(g^* \) acts trivially on the whole \(H^2(Y,\Z)\). 
Once again by Corollary \ref{cor_betti_bounds},
inspecting the Betti numbers of \(Y\) one concludes that the entire cohomology ring of \(Y\) is generated by degree 2 cohomology, so \(g^*\) acts trivially on the whole cohomology ring \(H^*(Y,\Q)\). The conclusion then follows by \cite[Theorem 1.1]{JiaLiu2024}, which says that such a \(g\) must be the identity.
\end{proof}


\section{Bounds on automorphisms of abelian Galois covers}\label{sec_abelian_bounds}

In this section we focus on abelian rational covers. Here is an important remark, which motivates the study of this particular case:
\begin{lem}\label{lem_question_equivalence_for_abelian_covers}
Let \(f \colon Y \dashrightarrow A\) be a rational cover, where \(Y\) is a smooth projective variety of Kodaira dimension zero and \(A\) is abelian. Then \(Y\) is birational to an abelian variety and \(f\) is Galois. 
\end{lem}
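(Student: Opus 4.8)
The plan is to exploit the rigidity of maps into abelian varieties, with the Albanese map of \(Y\) as the central tool. First I would note that, since \(Y\) is smooth and projective and \(A\) is abelian, the rational map \(f\) automatically extends to a morphism \(f \colon Y \to A\): this is the classical fact that a rational map from a smooth variety to an abelian variety has no indeterminacy (abelian varieties contain no rational curves, so blowing up a hypothetical indeterminacy locus would produce rational curves in \(A\)). I would then invoke the universal property of the Albanese variety to factor this morphism, after composing with a suitable translation \(t\), as \(f = t \circ h \circ \alb_Y\), where \(\alb_Y \colon Y \to \Alb(Y)\) is the Albanese map and \(h \colon \Alb(Y) \to A\) is a homomorphism of abelian varieties. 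Since \(f\) is dominant, \(h\) must be surjective, which gives \(\dim \Alb(Y) \geq \dim A = \dim Y\).

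The key input is Kawamata's theorem: because \(\kappa(Y) = 0\), the Albanese map \(\alb_Y\) is surjective with connected general fibre. Surjectivity forces \(\dim \Alb(Y) \leq \dim Y\), so in fact \(\dim \Alb(Y) = \dim Y\) and \(\alb_Y\) is generically finite. A general fibre of \(\alb_Y\) is then both finite and connected, hence a single reduced point, so \(\alb_Y\) has degree one and is birational. This already yields that \(Y\) is birational to the abelian variety \(B := \Alb(Y)\), settling the first assertion.

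For the second assertion, I would transport the situation along the birational map \(Y \sim_{bir} B\): under this identification \(f\) becomes \(t \circ h\), and since the translation \(t\) is an isomorphism it is irrelevant to the Galois question. Now \(h \colon B \to A\) is a surjective homomorphism of abelian varieties of equal dimension, hence an isogeny, and \(A \cong B / K\) for the finite kernel \(K := \ker(h)\), which acts freely on \(B\) by translations. Consequently \(\C(A) = \C(B)^{K} \hookrightarrow \C(B)\) is a Galois extension with group \(K\), so \(f\) is Galois.

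The only substantial ingredient is Kawamata's theorem on the Albanese map in Kodaira dimension zero; the rest is formal. The single point demanding a little care will be the deduction that a surjective, generically finite morphism with connected general fibre is birational, for which one uses that such a fibre is finite and connected, hence reduced to a point.
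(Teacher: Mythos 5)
Your proof is correct and follows essentially the same route as the paper: extend $f$ to a morphism using the absence of rational curves in $A$, factor through the Albanese map, apply Kawamata's theorem that $\alb_Y$ is a fibration when $\kappa(Y)=0$ to force $\alb_Y$ to be birational, and conclude that $f$ is (up to birational identification) an isogeny, hence Galois. Your write-up is slightly more explicit than the paper's on the final step, identifying the Galois group as the kernel of the isogeny acting by translations, but the argument is the same.
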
 
\begin{proof}
Any dominant rational map \(f \colon Y\dashrightarrow A\) is in fact everywhere defined since \(A\) contains no rational curve. By the universal property of the Albanese morphism \(\alb_Y \colon Y \rightarrow Alb(Y)\), \(f\) must factor as
\[\begin{tikzcd}
	Y && {\Alb(Y)} \\
	& A
	\arrow["{\alb_Y}", from=1-1, to=1-3]
	\arrow["f"', from=1-1, to=2-2]
	\arrow["g", from=1-3, to=2-2]
\end{tikzcd}.\]
Clearly \(g\) is dominant as \(f\) is, and by \cite{Kaw1981} we know that \(\alb_Y\) is a fibration, thus \[\dim A = \dim Y \geq \dim \Alb(Y)\geq \dim A .\]
We deduce that \(g\) is an isogeny and \(\alb_Y\) is birational, proving the first statement.
Any dominant morphism between abelian varieties of the same dimension is an isogeny, hence is Galois, which proves the second statement. 
\end{proof}

\begin{cor}
    In the notation of Questions \ref{quest_Galois_factors} and \ref{quest_birat_Galois_factors}, the two questions are equivalent whenever \(X'\) is abelian.
\end{cor}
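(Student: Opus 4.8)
The plan is to establish the two implications separately for a fixed Calabi-Yau rational cover \(f \colon X' \dashrightarrow X\) with \(X'\) abelian, say \(X' = A\), and to verify that the answer to Question \ref{quest_Galois_factors} for \(f\) coincides with the answer to Question \ref{quest_birat_Galois_factors}.

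First I would dispatch the direction ``\ref{quest_Galois_factors} positive \(\Rightarrow\) \ref{quest_birat_Galois_factors} positive'', which holds with no assumption on \(X'\). If \(f\) is Calabi-Yau Galois-like, then Definition \ref{def_Galois_like} supplies a CY rational cover \(g \colon Y \dashrightarrow X'\) with \(Y\) a projective Calabi-Yau manifold and \(f \circ g\) Galois; since a Calabi-Yau manifold satisfies \(\kappa(Y) = 0\), the map \(f \circ g \colon Y \dashrightarrow X\) is precisely the Galois cover of Kodaira dimension zero that Question \ref{quest_birat_Galois_factors} asks for.

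The substance lies in the converse, where the abelian hypothesis enters through Lemma \ref{lem_question_equivalence_for_abelian_covers}. Assuming Question \ref{quest_birat_Galois_factors} is positive, I would start from a Galois cover \(h \colon Y \dashrightarrow X\) with \(\kappa(Y) = 0\) factoring as \(h = f \circ g\) through a rational cover \(g \colon Y \dashrightarrow X'\). Because Kodaira dimension and the Galois property of \(h\) depend only on the birational class of \(Y\), I may take \(Y\) smooth and projective. Then \(g \colon Y \dashrightarrow A\) is exactly the situation of Lemma \ref{lem_question_equivalence_for_abelian_covers}, so \(Y\) is birational to an abelian variety \(A'\). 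Passing to this model turns \(g\) into a rational cover \(A' \dashrightarrow X'\) with Calabi-Yau (indeed abelian) source, i.e.\ a CY rational cover, while \(f \circ g = h\) stays Galois; by Definition \ref{def_Galois_like} this exhibits \(f\) as a Calabi-Yau Galois-like cover.

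I expect the corollary itself to be essentially formal, the genuine work being already contained in Lemma \ref{lem_question_equivalence_for_abelian_covers}. The conceptual point worth stressing is where the abelian hypothesis does its job: in general, bridging the two questions would require the abundance conjecture to upgrade a Kodaira-dimension-zero cover into an honest Calabi-Yau manifold, whereas here any such cover admitting a generically finite dominant map to an abelian variety is forced to be birational to an abelian variety, which is automatically a smooth projective Calabi-Yau manifold. The only care needed is the routine check that passing to the abelian birational model preserves both the factorization through \(X'\) and the Galois property of the total cover, and this is automatic since both are notions defined at the level of the function fields.
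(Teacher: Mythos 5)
Your proof is correct and follows the same route the paper intends: the corollary is stated immediately after Lemma \ref{lem_question_equivalence_for_abelian_covers} with no further argument, precisely because the nontrivial direction is the one you spell out — applying the lemma to the factor \(g\colon Y\dashrightarrow X'\) to replace \(Y\) by a birational abelian (hence Calabi-Yau) model, the easy direction and the reduction to a smooth birational representative being formal. Your explicit handling of the possibly singular \(Y\) allowed by Question \ref{quest_birat_Galois_factors} is a welcome, if routine, clarification.
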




For a finite order automorphism \(M\) of a finite dimensional \(\Q\)-vector space, one can bound \(\ord(M)\) indirectly by the following simple observation involving the value of the Euler totient function \(\varphi(\ord(M))\). The bound one gets this way is optimal. We will only need this in the case where \(\ord(M)\) is a prime power, but one can formulate a more general, albeit less clean statement.
\begin{lem}\label{lem_GLZ_period}
Let \(M\in GL_m(\Q)\) be of finite order equal to \(p^a\), where \(p\) is a prime and \(a\in \N\). Then \begin{equation}\label{eq_truc}
    \varphi(p^a)=(p-1)p^{a-1} \leq m.\end{equation}
\end{lem}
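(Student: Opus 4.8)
The plan is to read the bound off the minimal polynomial of $M$ and then reduce the whole statement to an elementary inequality for Euler's totient function.

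First I would use that $M^d = \id$, so the minimal polynomial $\mu$ of $M$ divides $x^d - 1 = \prod_{e \mid d}\Phi_e(x)$, where $\Phi_e$ denotes the $e$-th cyclotomic polynomial. Since $x^d-1$ has distinct roots, $\mu$ is squarefree, hence $\mu = \prod_{e \in S}\Phi_e$ for some set $S$ of divisors of $d$; moreover each $\Phi_e$ is irreducible over $\Q$ of degree $\varphi(e)$. Because $M^n = \id$ exactly when $e \mid n$ for every $e \in S$, the hypothesis that $M$ has order precisely $d$ translates into $\operatorname{lcm}(S) = d$. The characteristic polynomial of $M$ is a multiple of $\mu$ and has degree $m$, so
\[
m \ \ge\ \deg \mu \ =\ \sum_{e \in S}\varphi(e).
\]
Thus the lemma reduces to the purely arithmetic claim: for every set $S$ of divisors of $d$ with $\operatorname{lcm}(S) = d$, one has $\sum_{e \in S}\varphi(e) \ge \alpha(d)$.

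To establish this inequality I would use a charging argument. Write $d = \prod_{j=1}^k p_j^{a_j}$ as in the definition of $\alpha$. Since $\operatorname{lcm}(S) = d$, for each $j$ I can pick an element $e_j \in S$ divisible by the full prime power $p_j^{a_j}$. This assigns to each $e$ in the image of $j \mapsto e_j$ the set $Q(e) = \{\, j : e_j = e \,\}$ of primes it is responsible for, and these sets partition $\{1,\dots,k\}$. As $e$ is divisible by $\prod_{j \in Q(e)} p_j^{a_j}$ and $\varphi$ is multiplicative and monotone under divisibility, $\varphi(e) \ge \prod_{j \in Q(e)}\varphi(p_j^{a_j})$. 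Dropping the nonnegative contributions of the unassigned elements of $S$ and then using that a product of integers each $\ge 2$ dominates their sum, I would obtain
\[
\sum_{e \in S}\varphi(e)\ \ge\ \sum_{e}\ \prod_{j \in Q(e)}\varphi(p_j^{a_j})\ \ge\ \sum_{e}\ \sum_{j \in Q(e)}\varphi(p_j^{a_j})\ =\ \sum_{j=1}^{k}\varphi(p_j^{a_j})\ =\ \alpha(d),
\]
where the outer sums range over $e$ in the image.

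The main obstacle is precisely the step ``product $\ge$ sum'', which requires each factor $\varphi(p_j^{a_j})$ to be at least $2$. This holds for every prime power except $p^a = 2$, where $\varphi(2) = 1$; equivalently, the argument as written is delicate exactly when $d \equiv 2 \pmod 4$. I therefore expect the subtle point to be the handling of the prime $2$: one must isolate the factor of $2$ and treat the even case separately (exploiting that $\Phi_{2f}$ and $\Phi_f$ have the same degree for $f$ odd, so that adjoining the factor $2$ costs nothing in dimension), or otherwise restrict to the regime $d \not\equiv 2 \pmod 4$ in which $\alpha(d)$ is the sharp lower bound for $m$. The remaining ingredients — the cyclotomic factorisation of $x^d-1$ and the multiplicativity of $\varphi$ — are routine.
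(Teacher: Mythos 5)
Your reduction is exactly the one the paper intends: the minimal polynomial of \(M\) is a product \(\prod_{e\in S}\Phi_e\) of distinct cyclotomic polynomials with \(\operatorname{lcm}(S)=d\), hence \(m\ge\sum_{e\in S}\varphi(e)\), and everything hinges on comparing this sum with \(\alpha(d)\). Your charging argument settles the case \(d\not\equiv 2\pmod 4\) correctly, and the obstacle you isolated at the prime \(2\) is not a defect of your write-up but of the statement itself: the lemma is \emph{false} for \(d\equiv 2\pmod 4\), \(d>2\). Concretely, the companion matrix of \(\Phi_6(x)=x^2-x+1\) lies in \(GL_2(\Q)\) and has order exactly \(6\), while \(\alpha(6)=\varphi(2)+\varphi(3)=3>2\). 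So of your two proposed fixes, restricting to \(d\not\equiv 2\pmod 4\) is the right one; the observation that adjoining the factor \(2\) costs nothing in dimension cannot rescue inequality (\ref{eq_truc}) as written — it instead shows that the sharp lower bound on \(m\) is \(\min_S\sum_{e\in S}\varphi(e)\) over sets \(S\) of divisors with \(\operatorname{lcm}(S)=d\), which replaces \(\alpha(d)\) by \(\alpha(d/2)\) when \(d\equiv 2\pmod 4\).

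The paper's own proof breaks at the sentence claiming that the eigenvalue list must contain a primitive \(r_i\)-th root of unity for each \(i\): from \(d=\textup{lcm}\{\ord(\xi)\}\) one only gets, for each \(i\), an eigenvalue whose order is \emph{divisible} by \(r_i\), not one of order exactly \(r_i\) (in the example above the only eigenvalues are primitive sixth roots of unity; neither \(-1\) nor a primitive cube root of unity occurs, so neither \(\Phi_2\) nor \(\Phi_3\) divides the minimal polynomial). This does not affect the applications: Corollary \ref{cor_bound_abelian_automorphism} and the proof of Theorem \ref{thm_vosinmap_abeliancover} invoke the lemma only for the prime orders \(11\) and \(13\), where \(d\in S\) is forced and \(\alpha(d)=\varphi(d)\le m\) does hold. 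But the lemma — and Corollary \ref{cor_bound_abelian_automorphism} with it (an elliptic curve with \(j=0\) has an automorphism of order \(6\) fixing the origin, and \(\alpha(6)=3>2\)) — should be restated either under the hypothesis \(d\not\equiv 2\pmod 4\) or with the corrected bound.
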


\begin{proof}
Every eigenvalue \(\xi\) of \(M\) over \(\C\) is a \(p^a\)-th root of unity, and by the minimality of \(d\), there must be one \(\xi\) which is a primitive \(p^a\)-th root of unity. 

It follows that the cyclotomic polynomial \(\Phi_{p^a}\), which is irreducible over \(\Q\) with degree equal to \(\varphi(p^a)=(p-1)p^{a-1}\), divides the minimal polynomial \(q\in \Q[X]\) of \(M\). As the degree of \(q\) is at most \(m\), we get (\ref{eq_truc}).
\end{proof} 

\begin{cor}\label{cor_bound_abelian_automorphism}
    Let \(A\) be an abelian variety and \(f\in \Aut_0(A)\) an automorphism fixing zero of finite order equal to \(p^a\), where \(p\) is prime . Then
    \[(p-1)p^{a-1}\leq 2 \dim A.\]
\end{cor}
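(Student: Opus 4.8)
The plan is to reduce the statement directly to Lemma \ref{lem_GLZ_period} by realizing $f$ as an integral matrix of the same order. Write $A=V/\Lambda$ with $V\cong \C^{\dim A}$ and $\Lambda\cong H_1(A,\Z)\cong \Z^{2\dim A}$ a full lattice. Since $f$ fixes the origin, it is a homomorphism of abelian varieties, so it lifts to a $\C$-linear automorphism $\widetilde f$ of $V$ preserving $\Lambda$. First I would record that passing to the induced action on the lattice yields a representation
\[\rho \colon \Aut_0(A)\longrightarrow GL(\Lambda)\cong GL_{2\dim A}(\Z),\qquad f\longmapsto \widetilde f|_\Lambda .\]
The injectivity of $\rho$ is immediate: $\Lambda$ spans $V$ over $\R$, so $\widetilde f$, and hence $f$, is completely determined by its restriction to $\Lambda$.

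The key point is that an injective group homomorphism preserves the order of elements, so $M:=\rho(f)\in GL_{2\dim A}(\Z)\subset GL_{2\dim A}(\Q)$ is a matrix of finite order exactly equal to $\ord(f)$. Applying Lemma \ref{lem_GLZ_period} with $m=2\dim A$ to this $M$ then gives
\[\alpha(\ord(f))=\alpha(\ord(M))\leq 2\dim A,\]
which is precisely the desired inequality.

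I do not expect any real obstacle here: all of the arithmetic content is already packaged in Lemma \ref{lem_GLZ_period}, and the corollary is essentially the observation that $\ord(f)$ is the order of a $2\dim A \times 2\dim A$ integral matrix. The only point deserving care is the faithfulness of $\rho$ (so that $\ord(M)=\ord(f)$ rather than merely a divisor of it), which I would justify via the integral representation on $H_1(A,\Z)\cong\Lambda$ as above; everything else is a direct citation of the preceding lemma.
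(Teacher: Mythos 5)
Your proof is correct and follows essentially the same route as the paper: both realize $f$ as a finite-order element of $GL_{2\dim A}(\Z)$ via the faithful integral representation on the lattice (you use $H_1(A,\Z)\cong\Lambda$, the paper uses $H^1(A,\Z)$, which is equivalent) and then invoke Lemma \ref{lem_GLZ_period}. Your explicit justification of faithfulness via $\Lambda$ spanning $V$ over $\R$ is a fine substitute for the paper's appeal to "basic theory of abelian varieties."
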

\begin{proof}

From the basic theory of abelian varieties, we now that the group homomorphism
\[\Aut_0(A) \rightarrow \GL(H^1(A,\Z))\]
\[f \mapsto f^*\]
is injective, hence \(\ord(f)=\ord(f^*)\). Since \(\rk H^1(A,\Z)=2\dim A\), we conclude by Lemma \ref{lem_GLZ_period}.
\end{proof}

\begin{rmk}
    The above Corollary is probably well-known. The only similar statement the author could locate in the literature is \cite[Exercise 4, Chapter 5]{BirLan2004}, which asserts that for any finite order automorphism \(f\in \Aut_0(A)\), without restrictions on prime factors imposed in Corollary \ref{cor_bound_abelian_automorphism}, one has the stronger bound
    \[\varphi(\ord(f))\leq 2 \dim A.\]
    However, this is false\footnote{many thanks to Francesca Rizzo and Olivier Benoist for a conversation that led to this remark.}: a counterexample can be constructed as follows. Choose an elliptic curve \(E\) with automorphism \(f\) of order \(3\), and choose an abelian surface \(T\) with automorphism \(g\) of order \(5\). These abelian varieties of dimension \(\frac{\varphi(n)}{2}\) can be constructed starting from the ring of integers of cyclotomic number fields \(\Q(\xi_n)\), as in \cite[Proposition 13.3.1]{BirLan2004}.

    We consider the abelian threefold \(A:=E\times T\) with the automorphism \((f,g)\) of order \(15\). Then 
    \[\varphi(15)=8>6=2\dim(A),\]
    contradicting the above claim.
\end{rmk}

Before proving Theorem \ref{thm_vosinmap_abeliancover}, we give some background on how to construct an abelian Galois cover \(\phi \colon A\dashrightarrow F_1(W)\) for some smooth cubic fourfolds \(W\). According to \cite[Thm 2]{Add2016}, we know that by picking \(W\) onto those Hassett divisors \(\Ccal_d\) of the moduli space of cubics for which \(d\) fulfills a certain numerical condition, we can have \(F_1(W)\) be birational to \(S^{[2]}\), for some \(S\). The K3 surface \(S\) is determined by the fact its primitive middle cohmology lattice is identified with a sublattice of corank 1 inside the primitive middle cohomology of \(W\).
For this reason, by the surjectivity (outside some Heegner divisor) of the period map of polarized K3 surfaces, we can choose \(W\) in \(\Ccal_d\) so that \(H^2(S,\Z)\) contains a Kummer lattice. It follows that \(S\) is the Kummer surface associated to some abelian surface \(T\), and we get that \(F_1(W)\) is birational to \(\Km(T)^{[2]}\). One can then take for \(\phi\) the composition of this birational map with the natural Galois map
\[T^2 \dashrightarrow \Km(T)^{[2]}\]

We now give the proof of Theorem \ref{thm_vosinmap_abeliancover}, of which we repeat the statement here for the reader's convenience.
\begin{thm*}[Theorem \ref{thm_vosinmap_abeliancover}]
    Let \(W\subset \proj^5\) be a smooth cubic fourfold. Then the Voisin map \(\nu \colon F_1(W) \dashrightarrow F_1(W) \) is not an abelian Galois-like cover.
    \end{thm*}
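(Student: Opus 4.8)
The plan is to argue by contradiction, assuming the Voisin map $\nu$ is an abelian Galois-like cover, and to derive a contradiction from the structure of the monodromy group. Suppose there exist an abelian variety $A$ and a rational cover $\varphi \colon A \dashrightarrow F_1(W)$ such that $\nu \circ \varphi$ is Galois. Let $G = \Gal(\nu\circ\varphi)$ be the Galois group of this composition, and recall that $G = \Mon(\nu\circ\varphi)$. The key computational input is the monodromy group of the Voisin map, determined in \cite{GioGio2024}: this tells us $\Mon(\nu)$ explicitly. Since $\nu$ has degree $16$, its monodromy group sits inside $S_{16}$, and I would first extract from \cite{GioGio2024} the precise group-theoretic structure of $\Mon(\nu)$.

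The central strategy is to exploit the factorization to constrain $G$. Since $\nu \circ \varphi$ is Galois, $A$ (or rather a suitable birational model) is a finite quotient of the Galois closure $Y$ of $\nu\circ\varphi$, and $Y$ must be birational to an abelian variety by Lemma \ref{lem_question_equivalence_for_abelian_covers} applied to the dominant map $Y \dashrightarrow A$; consequently $Y$ carries a group structure and $G = \Gal(Y/F_1(W))$ acts on $H^1(Y,\Z) \cong \Z^{2\dim A}$ faithfully via $G \hookrightarrow \Aut_0(Y) \subset \GL(H^1(Y,\Z))$. The monodromy group $\Mon(\nu)$ is a quotient of $\Mon(\nu\circ\varphi) = G$, arising from the tower $Y \dashrightarrow F_1(W) \xrightarrow{\nu} F_1(W)$; more precisely $\Mon(\nu)$ is recovered as the image of $G$ in the symmetric group on the generic fiber of $\nu$. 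I would then use Corollary \ref{cor_bound_abelian_automorphism}: every element $g \in G$ of finite order $d$ satisfies $\alpha(d) \leq 2\dim A = 2\dim F_1(W) = 8$, since $\dim A = \dim F_1(W) = 4$ as $\varphi$ is generically finite. This gives a hard arithmetic restriction on the orders of elements of $G$.

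The crux of the argument is a contradiction between the element orders forced by $\Mon(\nu)$ (via \cite{GioGio2024}) and the bound $\alpha(\ord(g)) \leq 8$ imposed by the abelian structure. Concretely, if $\Mon(\nu)$ contains an element whose order $d$ has $\alpha(d) > 8$, then this element cannot lift compatibly to an element of $G = \Aut_0(Y)$ of the same (or a compatible multiple) order, since any such lift would violate Corollary \ref{cor_bound_abelian_automorphism}. I would lift a well-chosen element of $\Mon(\nu)$ of large order to $G$, checking that the order of its lift is at least as large, and then evaluate $\alpha$ on that order to break the inequality $\alpha(\ord(g)) \leq 8$. The arithmetic function $\alpha$, being the sum of Euler totients over coprime prime-power parts, grows quickly once $d$ has several distinct prime factors, so an element of moderate order with rich prime factorization suffices.

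The main obstacle will be the lifting step: controlling the order of a lift of an element of $\Mon(\nu)$ to $G$, since the surjection $G \twoheadrightarrow \Mon(\nu)$ may have a kernel on which orders can drop, and the correspondence between $\Aut_0(Y)$-elements fixing zero and arbitrary automorphisms requires care (translations in the abelian group must be separated from the linear part acting on $H^1$). I expect to handle this by passing to the linear action on $H^1(Y,\Q)$, where torsion is detected faithfully by Corollary \ref{cor_bound_abelian_automorphism}, and by showing that the relevant large-order element of $\Mon(\nu)$ necessarily forces a large-order \emph{linear} automorphism of $Y$ — rather than being absorbed into translations — by tracking how it permutes the fiber of $\nu$ and comparing with the translation subgroup $A[m]$-type structure visible in the Kollár--Larsen example of diagram \eqref{ex_kollar_larsen}. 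Once the linear part is pinned down, the numerical contradiction via $\alpha$ closes the proof.
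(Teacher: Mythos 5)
Your overall strategy coincides with the paper's: assume $\nu\circ\varphi$ is Galois for some abelian fourfold $A$, use the surjection $\Gal(\nu\circ\varphi)\twoheadrightarrow \Mon(\nu)$ together with the computation $\Mon(\nu)=S_{16}$ from \cite{GioGio2024}, and contradict the bound $\alpha(\ord(g_0))\leq 2\dim A=8$ of Corollary \ref{cor_bound_abelian_automorphism}. However, there is a genuine gap at exactly the step you flag as ``the main obstacle''. Your assertion that $G=\Gal(\nu\circ\varphi)$ acts faithfully on $H^1(A,\Z)$ via an embedding $G\hookrightarrow \Aut_0(A)$ is false: $G$ sits in $\Aut(A)$, and translations act trivially on $H^1$. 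Since the torsion of $A$ is isomorphic to $(\Q/\Z)^{8}$ and contains elements of every order, a lift to $G$ of your chosen large-order element of $S_{16}$ could a priori be a pure translation, whose linear part is the identity, and then Corollary \ref{cor_bound_abelian_automorphism} yields no contradiction. This is not a hypothetical worry: in the Koll\'ar--Larsen example (\ref{ex_kollar_larsen}) the translation subgroup $A[m]$ is precisely the part of the Galois group acting nontrivially on the fibers of the factored map, so ``tracking how the element permutes the fiber of $\nu$'' cannot by itself rule out that the lift is a translation. With a single lifted element the argument cannot be closed.

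The paper closes this gap with a two-element argument: it lifts both an $11$-cycle and a $13$-cycle of $S_{16}$ to elements $g,h\in\Bir(\nu\circ\varphi)\subset\Aut(A)$ of orders $11$ and $13$. If both were translations they would commute; but an $11$-cycle and a $13$-cycle in $S_{16}$ cannot commute, since commuting cycles have supports that are either disjoint (impossible, as $11+13>16$) or equal (impossible, as $11\neq 13$). Hence at least one of the linear parts $g_0,h_0\in\Aut_0(A)$ is nontrivial of order $11$ or $13$, and $\alpha(11)=10>8$, $\alpha(13)=12>8$ gives the contradiction. (An alternative repair: the translations in $G$ form a normal abelian subgroup, whose image in $S_{16}$ is a normal abelian subgroup and hence trivial, so the surjection onto $\Mon(\nu)$ factors through the linear part of $G$ in $\Aut_0(A)$, and Cauchy's theorem finishes.) Some such structural input separating translations from linear parts is indispensable, and it is the piece your proposal leaves unproved. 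A minor further point: since $\nu\circ\varphi$ is assumed Galois, its Galois closure $Y$ is just $A$ itself, so the detour through Lemma \ref{lem_question_equivalence_for_abelian_covers} is unnecessary.
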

\begin{proof}
Let us denote \(F_1(W)\) as \(F_1\). Suppose by contradiction that there exists a rational cover \(\psi \colon A\dashrightarrow F_1\) such that \(\nu \circ \psi\) is Galois. 
If we denote by \(\K\) the Galois closure of \(\nu^* \colon \C(F_1) \hookrightarrow \C(F_1)\) inside \((\nu\circ \psi)^* \colon \C(F_1) \hookrightarrow \C(A)\), we have the following surjective homomorphism of groups
\[\Bir (\nu\circ \psi)=\Gal (\nu\circ \psi) \twoheadrightarrow \Gal(\K /\C(X))=\Gal(\nu).\]
where the first equality follow from the Galois hypothesis. 
Importantly, by \cite{GioGio2024} we have
\[\Mon (\nu)=S_{16}.
\]
By the above surjectivity and the fact  \(\Gal(\nu)=\Mon(\nu)\) (\cite{Har79}), there should exist elements \(g,h\in \Bir(\nu\circ \psi)\subset \Aut(A)\) of order \(11\) and \(13\) respectively. We can consider \(g_0,h_0 \in \Aut_0(A)\) such that \[g(x)=g_0(x)+g(0) \ \ , \ \ h(x)=h_0(x)+h(0)\] for all \(x\in A\). We claim it is not possible that both \(h\) and \(g\) are translations. Indeed, if they were both translations they would commute, but seeing \(g,h\) as cycles in \(S_{16}\) of order \(11,13\) respectively, we see immediately they cannot commute, since commuting cycles either have disjoint supports, or the same support. 

If \(g\) is not a translation, then \(\ord(g_0)=11\), in particular
\[\varphi(\ord (g_0))=10>8= 2\dim (A).\]
But this is in contradiction with Corollary \ref{cor_bound_abelian_automorphism}. Similarly, if \(h\) is not a translation then we must have \(\ord(h_0)=13\) and we find again the contradiction
\[\varphi(\ord(h_0))=12>8= 2\dim (A).\]
This concludes the proof.
\end{proof}

\section{Some elliptic K3 examples}\label{sec_abelian_fibrations}
We recall some well-known constructions regarding elliptic fibrations (see for example \cite[Section 3]{BogTsc1999}).
Consider an elliptic K3 surface 
\[\pi \colon S \rightarrow \proj^1\] with a multisection \(D\subset S\) of degree \(d\). 
Let \(U\subset \proj^1\) denote the smooth locus of \(\pi\) and \(V=\pi^{-1}(U)\) its preimage.
For each integer \(m\), we denote 
\[\Jac^m(V)\rightarrow U\]
the relative Jacobian (of degree \(m\)) of the smooth projective family \(\pi \colon V \rightarrow U\), parametrizing line bundles of degree \(m\) on the fibers of \(\pi\).

For each integer \(k\), there is a rational endomorphism
\[f_k\colon \Jac^1(X) \dashrightarrow \Jac^1(X)\rule{26pt}{0pt}\]
\[ \rule{57pt}{0pt}X_s\ni x \longmapsto (kd+1)x - kD_{|X_s}\ .\] 

which up to the birational identifications coincides with the quotient by the action on \(V\) of the subgroup scheme of the Jacobian
\[G:=\Jac^0(V)[kd+1]=\Ker(\Jac^0(V)\xrightarrow{\cdot kd+1} \Jac^0(V))\]
Note that \(G\) is finite over \(V\), meaning that on each smooth fiber \(X_s\), \(f_k\) is just the quotient by the natural action of the finite torsion subgroup \(\Jac^0(X_s)[kd+1]\).

Thus the rational map \(f_k\) is Galois ``fiberwise", and will be actually Galois if the group scheme \(G/U\) is trivial, i.e., \[G\simeq \Jac^0(X_s)[kd+1]\times U\] as group schemes over \(U\), for one fixed \(s\in U\). In some cases, this \(f\) is Galois-like:
consider the following construction, very similar to the one of Kollár-Larsen in (\ref{ex_kollar_larsen}).
Let \(E_1\) and \(E_2\) be two elliptic curves, set \[X=\Km(E_1\times E_2)\] 
with \(g \colon E_1\times E_2 \dashrightarrow X\) the natural quotient map. Let \(\pi \colon X\rightarrow \proj^1\) be the map induced by the natural projection \(p_1 \colon E_1\times E_2\rightarrow  E_1\) at the level of quotients by the \(\pm 1\) involution \(i\).
Multiplication by \(m>2\) on the \(E_2\) component induces a rational map \(f \colon X\dashrightarrow X\), which is a particular case of the construction above. We have the diagram

\[\begin{tikzcd}
	{E_1\times E_2} & X & X \\
	{E_1} & {\proj^1}
	\arrow["g", dashed, from=1-1, to=1-2]
	\arrow["{p_1}"', from=1-1, to=2-1]
	\arrow["f", dashed, from=1-2, to=1-3]
	\arrow["\pi"', from=1-2, to=2-2]
	\arrow["\pi", from=1-3, to=2-2]
	\arrow[from=2-1, to=2-2]
\end{tikzcd}.\]
where \(f\circ g\) is Galois, with Galois group generated by \(i\) and translations by \(m\)-torsion points on \(E_1\). This proves that \(f\) is a Calabi-Yau Galois-like cover. However, as in the introduction, \(f\) is not Galois.

Let us now give an example where \(f\colon X\dashrightarrow X\) constructed as above is not CY Galois-like.

Let \(C\rightarrow \proj^1\) be the smooth compactification of the degree 3 cover given by projecting on the \(x\)-coordinate the curve cut by the equation
\[y^3=(x-a_1)(x-a_2)(x-a_3)(x-a_4)(x-b_1)^2\]
branched along the 5 points \(a_1,\dots, a_4,b_1 \in \proj^1\). This has genus \(3\) (see for example \cite[Section 4]{Moo2016}) and has an obvious automorphism \(\eta\) of order 3 commuting with the map to \(\proj^1\), given by multiplying \(y\) by a cube root of unity. Choose an elliptic curve \(E\) with an automorphism \(\alpha\) of order \(3\) and define \[\pi \colon X\rightarrow \proj^1\] to be the unique smooth minimal model for the fibration
\[\faktor{E\times C}{(\alpha, \eta)} \rightarrow \faktor{C}{\eta}\]
where \((\alpha,\eta)\) acts diagonally on the product and has order 3. Then \(X\) is a K3 surface and \(\pi\) is an isotrivial fibration with a section (\cite[Appendix A]{KLM2023}), whose monodromy is trivialized by base changing along the natural map

\[q \colon C\rightarrow \faktor{C}{\eta}\simeq \proj^1.\]

\begin{prop}
    \(X\) being as above, for any \(m>2\), the fiberwise multiplication by \(m\) map \(f_m \colon X \dashrightarrow X\) is not a CY Galois-like cover.
\end{prop}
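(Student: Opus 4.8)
The plan is to reduce the statement, via the surface dictionary of Remarks~\ref{rmk_surfaces} and~\ref{rmk_birational_question}, to a computation of the Kodaira dimension of the Galois closure of $f_m$. Since $X$ is a K3 surface, by Remark~\ref{rmk_surfaces} the map $f_m$ is CY Galois-like (Question~\ref{quest_Galois_factors}) if and only if it factors a Galois cover of Kodaira dimension zero (Question~\ref{quest_birat_Galois_factors}); and by Remark~\ref{rmk_birational_question} the latter holds if and only if the variety $Z$ with $\C(Z)$ equal to the Galois closure of $f_m^*\colon \C(X)\hookrightarrow \C(X)$ satisfies $\kappa(Z)=0$. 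Thus it suffices to identify $Z$ and show $\kappa(Z)\neq 0$. First I would record that the natural degree-$3$ rational Galois cover $p\colon E\times C\dashrightarrow X$, given by the quotient by the order-$3$ automorphism $(\alpha,\eta)$, has group $\langle(\alpha,\eta)\rangle\cong\Z/3$, and that (using the section of $\pi$ as fibrewise origin) the fibrewise multiplication $f_m$ lifts along $p$ to the honest morphism $[m]\times\id_C$ of $E\times C$, since $[m]$ commutes with $\alpha\in\End(E)$.

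Next I would exhibit a Galois cover of $X$ factoring $f_m$ and then show it is the closure $Z$. Consider the group $G:=E[m]\rtimes\langle(\alpha,\eta)\rangle$ acting on $E\times C$, where $E[m]$ acts by translations in the $E$-factor and the generator $(\alpha,\eta)$ acts as before, so that $\Z/3$ acts on $E[m]$ through $\alpha$. Quotienting first by $E[m]$ identifies $(E\times C)/E[m]\cong E\times C$ via $[m]\times\id_C$, and the residual $\Z/3$-action is again $(\alpha,\eta)$ (because $\alpha$ commutes with $[m]$), so the second quotient returns $X$; hence $E\times C\to X$ is Galois of degree $3m^2$ and factors $f_m$ through $p$. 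The source copy of $X$ corresponds to the subgroup $H:=\langle(\alpha,\eta)\rangle\le G$, so $Z=\faktor{(E\times C)}{\mathrm{core}_G(H)}$.

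The key computation is that $\mathrm{core}_G(H)$ is trivial. Conjugating $(\alpha,\eta)$ by a translation $t\in E[m]$ yields $(\alpha,\eta)$ followed by translation by $(1-\alpha)t$, which lies in $H$ only when $(1-\alpha)t=0$. Since $\alpha$ acts on $E$ as the order-$3$ complex multiplication by a primitive cube root $\omega$, and $N(1-\omega)=3$, the endomorphism $1-\alpha$ is nonzero on $E[m]$ for every $m>2$; therefore $H$ is not normal in $G$, and being of prime order $3$ it has trivial core. (This incidentally re-proves that $f_m$ itself is not Galois.) Consequently $Z=E\times C$, and $\kappa(Z)=\kappa(E\times C)=\kappa(E)+\kappa(C)=0+1=1$, because $C$ has genus $3$ and is of general type. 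As $\kappa(Z)=1\neq 0$, the cover $f_m$ factors no Galois cover of Kodaira dimension zero, so by the equivalences above it is not a CY Galois-like cover.

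I expect the main obstacle to be the precise identification of $Z$: one must correctly track how the order-$3$ monodromy automorphism $\alpha$—which is trivialised exactly by the base change $q\colon C\to\proj^1$—interacts with the multiplication-by-$m$ deck group, and verify that trivialising the monodromy simultaneously renders all of $E[m]$ rational, so that no intermediate cover is already Galois. The arithmetic input $N(1-\omega)=3$, which guarantees $1-\alpha\neq 0$ on $E[m]$, is what makes the core computation go through uniformly in $m$.
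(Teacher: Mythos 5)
Your proof is correct and follows essentially the same route as the paper: both identify the Galois closure of \(f_m\) with (a variety birational to) \(E\times C\) by showing that \(\langle(\alpha,\eta)\rangle\) is not normal in the deck group \(E[m]\rtimes \Z/3\Z\) of \(f_m\circ p\) --- which, since \(3\) is prime, forces the core to be trivial --- and then conclude from \(\kappa(E\times C)=1\). The only cosmetic difference is that you compute \(\kappa(E\times C)\) directly by additivity for products, whereas the paper invokes Iitaka's conjecture for fibrations over a base of general type.
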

\begin{rmk}
    It follows by Remark \ref{rmk_surfaces} that \(f_m\) does not factor any rational cover \(Y\dashrightarrow X\) with \(\kappa (Y)=0\).
\end{rmk}

\begin{proof}
 
We are going to prove that the Galois closure of \(f_m\) has Kodaira dimension at least \(1\). 
Denote by \(g\colon Z\rightarrow X\) the base change of \(q\) along \(\pi\). We end up with the following diagram:
\[\begin{tikzcd}
	Z & X & X \\
	C & {\proj^1}
	\arrow["g", from=1-1, to=1-2]
	\arrow[from=1-1, to=2-1]
	\arrow["\lrcorner"{anchor=center, pos=0.125}, draw=none, from=1-1, to=2-2]
	\arrow["f_m", dashed, from=1-2, to=1-3]
	\arrow["\pi"', from=1-2, to=2-2]
	\arrow["\pi", from=1-3, to=2-2]
	\arrow["{3:1}"', from=2-1, to=2-2]
\end{tikzcd}.\]
Note that the fibration \(Z\rightarrow C\) is generically isomorphic to the natural projection \(E\times C\rightarrow C\), since it has a section and trivial monodromy. In particular \(g\) is Galois and
\[\Gal(g)=\langle (\alpha, \eta) \rangle\simeq \faktor{\Z}{3\Z}.\]
We consider the fiberwise multiplication by \(m\) map \( f\colon X\dashrightarrow X\) and observe that 
\(f\circ g\) is Galois, while for \(m\geq 3\) \(f\) is not (as \((\alpha,\eta)\) is not stabilized by conjugation with the translation on the \(E\) component by a point that is not fixed by \(\alpha\)). 
These two facts together with \(\deg g=3\) being prime imply that \(\C(Z)\) is a Galois closure of \(f_m^* \colon \C(X)\hookrightarrow \C(X)\).
We now observe that the Kodaira dimension of \(Z\) is \(1\), as \(Z\) is birational to the product \(C\times E\) and the Kodaira dimension is additive for products of varieties.

If \(f\) is a Calabi-Yau Galois-like cover, then there must be a dominant rational map \(P\dashrightarrow Z\), for some Calabi-Yau surface \(P\). However we have \(\kappa(P)\geq \kappa(Z)=1\), which is a contradiction.
\end{proof}

\section{Uniruledness above the branch divisor}\label{sec_uniruled_abovebranch}
This section is devoted to the proof of Proposition \ref{prop_uniruled_above_branch}.
For the reader's convenience, we recall the notation and the statement:
Let \(\phi \colon X'\dashrightarrow X\) a dominant rational map between CY manifolds, and fix a resolution of indeterminacies, meaning a diagram
\[\begin{tikzcd}
	& Z \\
	{X'} && X
	\arrow["\tau"', from=1-2, to=2-1]
	\arrow["{\widetilde{f}}", from=1-2, to=2-3]
	\arrow["f", dashed, from=2-1, to=2-3]
\end{tikzcd}\]
with \(Z\) smooth and \(\tau\) birational.
\begin{prop*}[Proposition \ref{prop_uniruled_above_branch}]
In the above notation, suppose \(f\) is a Calabi-Yau Galois-like cover. Then each essential irreducible component \(D \subset \widetilde{f}^{-1}(\Bran(\widetilde{f}))\) with its reduced structure is a uniruled variety.
\end{prop*}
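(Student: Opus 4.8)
The plan is to use the Galois-like hypothesis to reduce the uniruledness of the components $D$ downstairs to the uniruledness of genuine \emph{ramification} divisors of a Calabi--Yau manifold upstairs, where it follows from the identity \eqref{eq_Ram_=_Exc}. By Definition \ref{def_Galois_like} there is a projective Calabi--Yau manifold $Y$ and a rational cover $g\colon Y\dashrightarrow X'$ with $h:=f\circ g\colon Y\dashrightarrow X$ Galois; I write $G:=\Gal(h)$ and $\Gamma:=\Gal(\C(Y)/\C(X'))\leq G$, the subgroup fixing the intermediate field $\C(X')$, so that $g$ and $f$ are modelled by the quotients $Y\dashrightarrow Y/\Gamma\sim_{bir}X'$ and $Y/\Gamma\dashrightarrow Y/G\sim_{bir}X$. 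The finite group $G$ acts birationally on $Y$; after regularizing this action by a standard equivariant resolution (which preserves $\kappa=0$) I may assume $G$ acts biregularly on a smooth projective model $W$ of $Y$, so that $W\to W/G\sim_{bir}X$ is a genuine Galois quotient factoring through $q_\Gamma\colon W\to W/\Gamma\sim_{bir}X'$.

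The first key step is to show that every prime divisor $R\subset W$ with nontrivial inertia in $G$ is uniruled. For this I resolve once more, obtaining $\rho\colon W'\to W$ birational together with a morphism $\widetilde{h}\colon W'\to X$ resolving $h$, where $W'$ is smooth with $\kappa(W')=0$. Applying \eqref{eq_Ram_=_Exc} to $\widetilde{h}$ gives $\Ram(\widetilde{h})=\Exc(\rho)$, so every component of $\Ram(\widetilde{h})$ is an exceptional divisor of a birational morphism, hence uniruled. Since $R$ has nontrivial inertia, its strict transform $R'\subset W'$ is ramified for $\widetilde{h}$, that is $R'\subset\Ram(\widetilde{h})$; thus $R'$, and therefore its birational image $R$, is uniruled.

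The second step identifies each essential $D$ with the image of such an $R$. An essential component $D$ dominates an irreducible component $B\subset\Bran(\widetilde{f})$, so under $X'\sim_{bir}W/\Gamma$ it corresponds to a prime divisor $\bar D\subset W/\Gamma$ lying over the branch divisor $\bar B\subset W/G$ corresponding to $B$. Every prime of $W/\Gamma$ over $\bar B$ is the image under $q_\Gamma$ of a prime $R\subset W$ over $\bar B$; I fix one with $q_\Gamma(R)=\bar D$. Since $\bar B$ lies in the branch locus of the intermediate cover $W/\Gamma\dashrightarrow W/G$ (modelling $f$), multiplicativity of ramification indices in the tower $W\to W/\Gamma\to W/G$ shows that $\bar B$ lies in the branch locus of the full Galois cover $W\to W/G$. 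Consequently every prime of $W$ over $\bar B$ has nontrivial conjugate inertia; in particular $R$ does, so $R$ is uniruled by the first step. As $q_\Gamma|_R\colon R\to\bar D$ is finite and finite morphisms do not contract the rational curves covering $R$, the image $\bar D$ is uniruled, and hence so is $D$.

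The main obstacle is precisely this last identification, and it is where the Galois-like hypothesis is indispensable. The naive attempt---using that $\Bran(\widetilde{f})$ is uniruled together with the fact that $D\to B$ is generically finite---fails, because a generically finite cover of a uniruled variety need not be uniruled: an essential component $D$ that is \emph{unramified} over $B$ behaves like an unramified sheet and could a priori be of general type. Passing to the Galois model $W\to W/G$ is what resolves this, since over a branch divisor of a Galois cover \emph{all} primes ramify with conjugate, nontrivial inertia; thus even the sheets unramified in the intermediate cover $X'\dashrightarrow X$ lift to honest ramification primes in the Calabi--Yau manifold $W$, where \eqref{eq_Ram_=_Exc} applies. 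I will take some care that the birational identifications $X'\sim_{bir}W/\Gamma$ and $X\sim_{bir}W/G$ match the divisors $D,B$ with the primes $\bar D,\bar B$---which is harmless since these are all honest (non-exceptional) divisorial valuations of the function fields---and that the equivariant regularization does not disturb $\kappa(W)=0$.
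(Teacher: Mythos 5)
Your proof is correct and follows essentially the same route as the paper: pass to the Galois closure \(Y\dashrightarrow X\), observe that over a branch component of a Galois cover \emph{every} prime upstairs is ramified (you phrase this via conjugate inertia groups and multiplicativity of ramification indices, the paper via \(G\)-equivariance of the fibers of the resolved map), deduce from the canonical bundle formula (\ref{eq_Ram_=_Exc}) that these primes are components of the exceptional divisor of a birational morphism onto the smooth Calabi--Yau \(Y\), hence uniruled, and push uniruledness down to \(D\). The one point to tidy up is your closing claim that \(D\) always defines a non-exceptional divisorial valuation of \(\C(X')\): an essential component \(D\) may perfectly well be \(\tau\)-exceptional, but in that case it lies in \(\Exc(\tau)=\Ram(\widetilde{f}\,)\) and is uniruled for free, so the argument is unaffected.
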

\begin{proof}

By introducing some blow-up \(\sigma \colon \simY \rightarrow Y\) we obtain the diagram

\[\begin{tikzcd}
	& {\widetilde{Y}} && Z \\
	Y && {X'} && X
	\arrow["G", from=1-2, to=1-2, loop, in=105, out=165, distance=5mm]
	\arrow["{\widetilde{\psi}}", from=1-2, to=1-4]
	\arrow["\sigma"', from=1-2, to=2-1]
	\arrow["\tau"', from=1-4, to=2-3]
	\arrow["{\widetilde{f}}", from=1-4, to=2-5]
	\arrow["G", dashed, from=2-1, to=2-1, loop, in=105, out=165, distance=5mm]
	\arrow["\psi", dashed, from=2-1, to=2-3]
	\arrow["\phi"', curve={height=24pt}, dashed, from=2-1, to=2-5]
	\arrow["f", dashed, from=2-3, to=2-5]
\end{tikzcd}\]
with \(\widetilde{Y} \) smooth and \(G=\Gal(\phi)\) acting on \(\widetilde{Y}\) by automorphisms. Set \(\widetilde{\phi}:= \widetilde{f}\circ \widetilde{\psi}\). If we choose an irreducible divisor \(E\subset \widetilde{Y}\) lying above \(D\subset Z\), then \(E\) uniruled would imply \(D\) uniruled. Since \(E\subset \widetilde{\phi}^{-1}(\Bran(\widetilde{\phi}))\), we may focus on the diagram

\[\begin{tikzcd}
	& {\widetilde{Y}} \\
	Y && X
	\arrow["\sigma"', from=1-2, to=2-1]
	\arrow["{\widetilde{\phi}}", from=1-2, to=2-3]
	\arrow["\phi"', dashed, from=2-1, to=2-3]
\end{tikzcd}\]
and reduce to proving that \(E\) is uniruled.
As \(Y\) and \(X\) have trivial canonical bundle, we have
\[\Ram(\widetilde{\phi})= \Exc(\sigma)\]
by (\ref{eq_Ram_=_Exc}). As every component of the exceptional locus of a birational morphism onto a smooth variety is uniruled, it suffices to show that \(E\subset \Ram(\widetilde{\phi})\).
Let \(x \in E\). If \(x\in \Exc(\widetilde{\phi})\) then \(x\) is surely a ramification point for \(\widetilde{\phi}\). If \(x\notin \Exc(\widetilde{\phi})\) then \(\widetilde{\phi}^{-1} \widetilde{\phi}(x)=G x\). But \(\widetilde{\phi}(x)\in \Bran (\widetilde{\phi})\) so there is at least one point of ramification \(y\) in \(Gx\). Since \(\widetilde{\phi}\) is \(G\)-equivariant, it then ramifies in all the fiber \(Gx\), in particular on \(x\) as well. We conclude that \(E\subset \Ram(\widetilde{\phi})\) and we are done. 
\end{proof}

Let us recall the construction of the \hk{} 8-fold of \cite{LLSvS2017}.
We start from a smooth cubic fourfold \(W\subset \proj(V)=\proj^5\) which does not contain any plane. Let \(M\) be the variety of generalized twisted cubics on \(M\), meaning the irreducible component of \(\Hilb^{3t+1}_W\) which contains the smooth twisted cubics lying on \(W\). It has dimension \(10\) and is smooth (\cite[Theorem A]{LLSvS2017}). For any curve \(C\in M\), its linear span \(\langle C\rangle \subset \proj(V)\) is a \(\proj^3\). This defines a morphism
\[M \xrightarrow{\sigma } \Gr(4,V)\]
which by \cite[Theorem B]{LLSvS2017} factors through an \(8\)-fold \(Z'\) as

\[\begin{tikzcd}
	M \\
	{Z'} & {Gr(4,V)}
	\arrow["a"', from=1-1, to=2-1]
	\arrow["\sigma", from=1-1, to=2-2]
	\arrow["b"', from=2-1, to=2-2]
\end{tikzcd}.\]

The \hk{} 8-fold \(Z\) of \cite{LLSvS2017} is then obtained as a divisorial contraction \(Z'\rightarrow Z\).
To define the Voisin map and its branch divisor, we only need to describe these morphisms over appropriate open subspaces.

There is an open subscheme \(U\subset Z'\) whose complement has codimension \(\geq 2\) and whose points parametrize pairs \((S,\Dcal)\) where \(S\subset W\) is a cubic surface with at worst \(A_1\) singularities and \(\Dcal\) the complete linear system of rank 2 (\cite[Theorem 1.2]{LLSvS2017}) of twisted cubics lying on \(S\). If we denote the cubic surface cut out by \(\langle C \rangle\) as \(S_C:=W\cap \langle C\rangle\), then any curve \([C] \in a^{-1}(U)\) is sent to \((S_C, |C|)\).

The Voisin map \[\nu \colon  F_1(W)\times F_1(W)\dashrightarrow Z\] 
is defined as follows:
for two skew lines \([L],[L']\in F_1(W)\), the linear span \(\langle L , L'\rangle\) intersects \(W\) in a cubic surface \(S_{L,L'}\). If \(L\) and \(L'\) are general, \(S_{L,L'}\) is smooth. We can then define \[\nu([L],[L']):=(S_{L,L'},|L-L'-K_{S_{L,L'}}|).\]
By \cite[Proposition 4.8]{Voi2016}, we have \(\deg \nu =6\). 

Let us make the following remark, which will be used later. 
\begin{rmk}\label{rmk_dscriminant_in_Gr}
The branch divisor \(\Bran(\nu)\subset Z\) is mapped via \(b\) to the discriminant divisor \(\mathcal{D}\subset\Gr(4,V)\), parametrizing singular cubic surfaces sections. It is indeed clear from the description in \cite{Voi2016} that \(b\circ \nu\) is étale over the open set \(\Gr(4,V)\setminus \Dcal\) parametrizing smooth cubics surfaces
\end{rmk}
We now apply Proposition \ref{prop_uniruled_above_branch} to study Question \ref{quest_Galois_factors} on Example \ref{eg_voisinmap_LLSvS}. 

\begin{cor}\label{cor_uniruled_LLSvS}
    If \(W\) is very general, the map \[\nu \colon  F_1(W)\times F_1(W)\dashrightarrow Z\] is not a Calabi-Yau Galois-like cover.
\end{cor}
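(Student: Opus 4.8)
The plan is to apply the uniruledness criterion of Proposition \ref{prop_uniruled_above_branch} and derive a contradiction. If $\nu$ were a Calabi-Yau Galois-like cover, then every essential irreducible component $D\subset \widetilde\nu^{-1}(\Bran(\widetilde\nu))$ (with reduced structure, after fixing a resolution of indeterminacies $\widetilde\nu$ of $\nu$) would have to be uniruled. So the whole argument reduces to exhibiting one such essential component that is \emph{not} uniruled. The natural candidate sits above the branch divisor, whose image under $b$ is the discriminant $\Dcal\subset \Gr(4,V)$ by Remark \ref{rmk_dscriminant_in_Gr}.

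First I would describe the relevant component of $\widetilde\nu^{-1}(\Bran(\widetilde\nu))$ concretely inside $F_1(W)\times F_1(W)$. Since $b\circ\nu$ is étale over $\Gr(4,V)\setminus\Dcal$, the branch locus of $\nu$ lies over the locus where the cubic surface $S_{L,L'}=W\cap\langle L,L'\rangle$ becomes singular. A pair of skew lines $([L],[L'])$ maps into $\Bran(\nu)$ exactly when the spanned $\proj^3$ cuts $W$ in a singular cubic surface; thus the component $D$ to examine is (a resolution of) the divisor $\Delta\subset F_1(W)\times F_1(W)$ parametrizing pairs of lines whose span meets $W$ singularly. I would first check that this divisor is indeed essential, i.e. that $\widetilde\nu_* D\neq 0$, which follows from the fact that the generic singular cubic surface section is attained and that $\nu$ genuinely ramifies there.

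The main step — and the main obstacle — is to prove that this divisor $\Delta$ (or the appropriate irreducible component of it over which ramification is essential) is \emph{not uniruled} for very general $W$. Here the very-general hypothesis on $W$ should be exploited through a Hodge-theoretic or transcendental argument: for a very general cubic fourfold the Fano variety $F_1(W)$ has Picard rank one and trivial transcendental obstructions, and I would argue that $\Delta$ inherits a nonzero holomorphic form (for instance a restriction of a power of the symplectic form $\sigma_F$, using that $\Delta$ is a divisor in the product and the symplectic form does not vanish identically on it). A variety carrying a nonzero holomorphic $p$-form of top-type along the fibers of an MRC fibration cannot be uniruled, since uniruled varieties have $\kappa=-\infty$ and, more to the point, a nonzero holomorphic top-degree or appropriately positive form obstructs the existence of a covering family of rational curves. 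Concretely, I expect the cleanest route is to show $\kappa(\Delta)\geq 0$, or to produce a nonzero global holomorphic form on a desingularization of $\Delta$ that is incompatible with uniruledness; the transcendentality coming from the very general $W$ is what guarantees such a form survives and is not killed by any fibration by rational curves.

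Assembling these pieces: Proposition \ref{prop_uniruled_above_branch} forces $D$ uniruled if $\nu$ is Galois-like, while the previous step shows the essential component over $\Dcal$ is not uniruled for very general $W$. This contradiction shows $\nu$ is not a Calabi-Yau Galois-like cover, proving the corollary. The delicate point throughout is controlling the geometry of the ramification divisor precisely enough — both its irreducible components and its birational invariants — which is why one restricts to very general $W$, where $F_1(W)$ is as simple as possible.
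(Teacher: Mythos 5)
Your framework is right---apply Proposition \ref{prop_uniruled_above_branch} and exhibit an essential component above the branch divisor that is not uniruled---but the key step of your argument fails. You propose to rule out uniruledness of the divisor $\Delta$ by restricting a holomorphic $2$-form (a power of the symplectic form) to it. This does not work: uniruledness only forces the vanishing of \emph{pluricanonical} sections, i.e.\ holomorphic forms of top degree, and a uniruled variety of dimension $7$ can perfectly well carry nonzero holomorphic $2$-forms (think of $\proj^1\times A$ with $A$ abelian, or of the many uniruled divisors on \hk{} manifolds, which are central to this very paper --- e.g.\ the exceptional components of $\Ram(\simf)=\Exc(\tau)$ are always uniruled yet sit inside a symplectic variety). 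So the transcendental argument you sketch cannot establish that $\Delta$ is not uniruled, and in fact the paper never determines whether the full preimage of the branch divisor is uniruled or not.

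The paper's route is different and avoids this issue. First, it must locate an essential component $D$ that is \emph{not} contained in $\Ram(\simnu)=\Exc(\tau)$ (the $\tau$-exceptional ones are automatically uniruled and give no contradiction); this is where very generality of $W$ enters, via $\rho_Z=1$: then $\Bran(\simnu)$ is ample, $\simnu^*\Bran(\simnu)$ is nef, and the Negativity Lemma produces a component not contracted by $\tau$. Second, assuming that component uniruled, its image $N$ is a uniruled divisor in $F_1(W)\times F_1(W)$; since $F_1(W)$ is not uniruled, the ruling curves must be contracted by one projection, forcing $N=p_1(N)\times F_1(W)$ (up to swapping factors). Fixing $[L]\in p_1(N)$, the slice $\{[L]\}\times F_1(W)$ dominates the four-dimensional Grassmannian $G_L$ of $\proj^3$'s through $L$ while mapping into the discriminant $\Dcal$, contradicting Bertini. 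You would need to replace your Hodge-theoretic step with an argument of this structural kind; as written, the proposal has a genuine gap at its central claim.
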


\begin{proof}[Proof of Corollary \ref{cor_uniruled_LLSvS}]
Let \(\simnu \colon Y\rightarrow Z\) be a resolution of indeterminacies of \(\nu\) by the blow-up \(\tau \colon \simY \rightarrow Y\). By Proposition \ref{prop_uniruled_above_branch}, it suffices to exhibit one essential component \(D\) of \(\simnu^{-1}(\Bran(\simnu)\) which is not uniruled. We first remark that there must be an essential component \(D\subset \simnu^{-1}(\Bran(\simnu))\) such that \(D\not \subset \Ram(\simnu)\). 
Indeed, as \(W\) is very general, we can assume \(\rho_Z=1\), in which case \(\Bran(\simnu)\) is ample and so \(\simnu^* (\Bran(\simnu))\) is nef.
By the Negativity Lemma this implies \(\tau_* \simnu^* \Bran(\simnu)\) is not trivial, meaning there is some irreducible divisor \(D\subset \supp (\simnu^* \Bran(\simnu))\) which is not contracted by \(\tau\). 
By \(\Ram(\simnu)=\Exc(\tau)\), this is the same as \(D\not \subset \Ram(\simnu)\). By the projection formula on Chow groups, \(\simnu_* \simnu^* \Bran(\simnu)= \deg (\simnu) \Bran(\simnu)\), so \(\simnu_* D \) is a component of \(\Bran(\simnu)\).

Assume by contradiction that \(D\) is uniruled. The birational image \(N\) of \(D\) inside \(F_1(W)\times F_1(W)\) is then a uniruled divisor of \(F_1(W)\times F_1(W)\). Denote the projections on the first and second factors of \(F_1(W)\times F_1(W)\) as \(p_1\) and \(p_2\). If \(p_1(N)\subset F_1(W)\) is a proper closed algebraic subset, then for dimensional reasons \(p_1(N)\) must have codimension \(1\), and \(N=p_1(N)\times F_1(W)\) by irreducibility of \(N\).
If instead \(p_1(N)=F_1(W)\), the fact that \(F_1(W)\) is not uniruled implies that the general rational curve of the uniruling of \(N\) must be contracted, which implies it cannot be contracted by \(p_2\), so \(p_2(N)\subset F_1(W)\) is a proper closed algebraic subset. But then, as before, \(N = F_1(W)\times p_1(N)\). 
Up to permuting, we may thus assume \(N=p_1(N)\times F_1(W)\). Now fix one element \([L]\in p_1(N)\): by the previous argument, for any \([L']\in F_1(W)\), 
\[\nu([L],[L'])\in \Bran(\nu)\]
hence \[b\circ \nu ([L],[L']) \in \Bran(b\circ \nu)\subset \Dcal\]
by Remark \ref{rmk_dscriminant_in_Gr}.

If \(G_L\) denotes the Grassmanian parametrizing projective \(\proj^3\)'s in \(\proj(V)\) containing \(L\), then \(G_L\) has dimension 4 and the natural generically finite map
\[b\circ \nu \colon  F_1(W)\times F_1(W) \rightarrow \Gr(4,V)\]
sends \(\{[L]\}\times F_1(W)\) into \(G_L\), which therefore must be dominant onto \(G_L\). However, by Bertini, for the general element \([\Pi]\in G_L\), the corresponding \(3\)-space \(\Pi\) intersects \(W\) in a smooth cubic surface, that is, \(\Pi\not \in \Dcal\). This is a contradiction.
\end{proof}

\section{Beauville-Bogomolov negativity of branch divisors}\label{sec_exceptional_branch}

For this section we need to recall the following definition (see for example \cite[Definition 1.4]{Den2023}).
\begin{df}\label{def_exceptional}
    Let \(X\) be a \hk{} manifold and let \(q_X\) be its Beauville-Bogomolov-Fujiki form. We say that an \(\R\)-divisor \(D=\sum_i a_i D_i\) is \textit{\(q_X\)-exceptional} when the Gram matrix 
    \[(q_X(D_i,D_j))_{i,j}\]
    is negative definite.
\end{df}

\begin{rmk}\label{rmk_exceptional_components_independent}
    Note that from the above condition follows that the cohomology classes \([D_i]\) are linearly independent in \(H^2(X,\R)\). Moreover, if \(E\subset X\) is a subdivisor of \(D\) (meaning \(D-E\) is effective) and \(D\) is \(q_X\)-exceptional then \(E\) is also exceptional.
\end{rmk}

Let \(X\) be a \hk{} manifold and \(\psi \colon Y\dashrightarrow X\) a CY Galois cover. 
Set \(G:=\Gal(\phi)\) and consider the open subset \(V\subset Y\) where \(\phi\) and all elements \(g\in G\) are defined. One has that \(Y\setminus V\) has codimension at least \(2\) in \(Y\), and 
    \[\psi \colon V\rightarrow X\] is étale, inducing an open embedding
    \[\faktor{V}{G}\hookrightarrow X .\]
We denote the image of this open embedding as \(U\subset X\).

\begin{thm}\label{prop_branch_exceptional}
In the above notation, we consider the branch divisor 
\[B:=\Bran(\psi)\subset X\]
and we suppose that \(V/G\) admits a small compactification, i.e., there exists a normal, \(\Q\)-factorial projective variety \(Z\) which contains a codimension two closed subset whose complement is isomorphic to \(V/G\).
Then \(B\) is \(q_X\)-exceptional.
\end{thm}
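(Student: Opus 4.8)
The plan is to exhibit $B$ as part of the negative part of a divisorial Zariski decomposition on $X$, and then to invoke the negativity of such negative parts on \hk{} manifolds. First I would extract the geometric content of the étale hypothesis. Since $\psi\colon V\to X$ factors through the open embedding $V/G\hookrightarrow X$ with image $U$, and $V\to V/G$ is an unramified $G$-cover, the map $\psi$ ramifies nowhere over $U$; hence $B\cap U=\varnothing$, i.e. $B\subseteq X\setminus U$. The small compactification $Z$ of $V/G$ and the inclusion $U\subset X$ now present two projective compactifications of the same dense open set $U\cong V/G$. The induced birational map $\mu\colon X\dashrightarrow Z$ is therefore an isomorphism over $U$, and every prime divisor of $X$ disjoint from $U$—in particular every component $B_i$ of $B$—is carried into $Z\setminus(V/G)$, which has codimension at least two. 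Thus $\mu$ contracts each $B_i$, so $B$ is contained in the $\mu$-exceptional divisor.

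Next I would produce the pseudoeffective class whose negative part detects $B$. Fix a common resolution $p\colon W\to X$, $q\colon W\to Z$ of $\mu$ and an ample $\R$-divisor $A$ on $Z$, and set $\alpha:=p_*\,q^*A\in H^2(X,\R)$, which is big. Applying the divisorial Zariski decomposition of \cite{Bou2004} (available since the \hk{} manifold $X$ is smooth), I write $\alpha=P(\alpha)+N(\alpha)$ with $N(\alpha)$ the effective negative part. The crucial claim is that, because $Z$ is normal and $\Q$-factorial and $A$ is ample, the support of $N(\alpha)$ is exactly the set of $\mu$-exceptional prime divisors; this is where I would invoke the result of \cite{Dru2011} on Zariski decomposition over varieties with numerically trivial canonical class to identify $\supp N(\alpha)$ with the contracted divisors. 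In particular every $B_i$ lies in $\supp N(\alpha)$.

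Finally, again by the theory of \cite{Bou2004} specialized to \hk{} manifolds, the support of the negative part $N(\alpha)$ is $q_X$-exceptional in the sense of Definition \ref{def_exceptional}: the Gram matrix of its prime components is negative definite with respect to $q_X$. Since the components $B_i$ of $B=\sum_i b_i B_i$ form a subset of these prime divisors, $B$ is a subdivisor of $\supp N(\alpha)$, and so by Remark \ref{rmk_exceptional_components_independent} it is itself $q_X$-exceptional, as desired.

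I expect the main obstacle to be the middle step: pinning down $\supp N(\alpha)$ precisely as the $\mu$-exceptional locus, equivalently showing that each contracted $B_i$ is genuinely charged by $N(\alpha)$ rather than absorbed into the positive part $P(\alpha)$. This is exactly where the $\Q$-factoriality of the small compactification $Z$ and the ampleness of $A$ enter, through \cite{Dru2011}; the delicate bookkeeping is to control the possible extra $p$-exceptional contributions on the common resolution $W$ so that the negative part on $X$ is supported on the $B_i$ (together with the remaining divisorial components of $X\setminus U$) and on nothing else.
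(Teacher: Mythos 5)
Your opening reductions are fine and agree with the paper: $B\subset X\setminus U$, and every component $B_i$ of $B$ is contracted by the birational contraction $\mu\colon X\dashrightarrow Z$. The gap is your ``crucial claim'' that $\supp N(\alpha)$, for $\alpha=p_*q^*A$ with $A$ ample on $Z$, is the set of $\mu$-exceptional prime divisors. This is false: since $q^*A$ is nef on the common resolution $W$, its pushforward $\alpha$ lies in the closure of the movable (modified nef) cone of $X$, and by Boucksom's theory such classes have \emph{vanishing} negative part, so $N(\alpha)=0$ and no $B_i$ is charged. Concretely, writing $p^*\alpha=q^*A+F$ with $F\geq 0$ and $p$-exceptional (negativity lemma), the strict transforms $\widetilde{B_i}$ are not components of $F$, so the minimal multiplicity of $\alpha$ along each $B_i$ is zero. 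An explicit counterexample to your claim is the Hilbert--Chow contraction $\mu\colon S^{[2]}\to S^{(2)}$, which is an instance of the theorem (with $Y=S\times S$, $G=\Z/2\Z$): here $\mu$ is a morphism, $\mu^*A$ is nef, $N(\mu^*A)=0$, yet the exceptional divisor is contracted. Contracted divisors sit in the \emph{null locus} of $\mu^*A$, not in its negative part. This is also not what \cite{Dru2011} provides: Druel's Lemme 2.5 requires $\mu$ to be $D$-strictly negative, i.e. $p^*D=q^*(\mu_*D)+E$ with $E$ effective, $q$-exceptional, and containing the strict transforms of all contracted divisors --- a hypothesis that fails for $D=\alpha$ precisely because $F$ is $p$-exceptional.

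The repair, which is the paper's route, is to apply the divisorial Zariski decomposition to $B$ itself rather than to a pullback of an ample class. Since $\mu_*B=0$ and every component of $p^*B$ is $q$-exceptional (the strict transforms because $B\subset X\setminus U$ is sent into codimension $\geq 2$, the remaining components because $\mu^{-1}$ contracts no divisors), $\mu$ is $B$-strictly negative, and \cite[Lemme 2.5]{Dru2011} yields $|N(B)|=|B|$. One must then still upgrade this to $B=N(B)$: writing $P(B)=\sum_i a_iB_i$ with $a_i\geq 0$, the $q_X$-nefness of the positive part forces $q_X(P(B))\geq 0$, whereas $P(B)$ is supported on $\supp N(B)$, whose Gram matrix is negative definite; hence all $a_i=0$. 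Only then does the negative definiteness of the Gram matrix of the components of $N(B)=B$ give the conclusion. Your final step (passing from a $q_X$-exceptional divisor to a subdivisor via Remark \ref{rmk_exceptional_components_independent}) is fine, but it is fed by a premise that does not hold.
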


\begin{proof}
Let \(Z\) be a small compactification of \(V/G\).
    Then the natural birational map \[f \colon X\dashrightarrow Z\]
    is a birational contraction, meaning that its inverse \(f^{-1}\) contracts no divisors. Moreover, the prime divisors \(P \subset X\) that are contracted by \(f_*\) are precisely those supported on \(X\setminus U\). 
    Indeed, for any prime divisor \(P\), if \(U'\) denotes the locus of definition of \(f\) then \(U'\cap P\neq \emptyset\) and \(f_* P\) is equal to the divisorial component of \(\overline{f_{|U'}(P\cap U')}\), which might be empty. But \(f_{|U'}(U'\setminus U)\subset Z\setminus U\) and the latter is of codimension at least 2 in \(Z\), so all the prime divisors on \(U'\) which are not contracted must intersect \(U\), and conversely it is clear that if \(P\) intersects \(U\) then it is not contracted.
    
    We introduce a resolution of indeterminacies 
\[\begin{tikzcd}
	& \simX \\
	X && {Z}
	\arrow["\tau"', from=1-2, to=2-1]
	\arrow["\simf", from=1-2, to=2-3]
	\arrow["f", dashed, from=2-1, to=2-3]
\end{tikzcd}.\]
The support of the branch divisor \(B\subset X\) is precisely the divisorial part of the branch locus, that is the divisorial part of \(X\setminus U\). But by the above argument a prime divisor \(P\subset X\) is contracted by \(f_*\) if and only if \(P\subset X\setminus U\), so the support of \(\tau^*B\) contains all strict transforms of those primitive divisors \(P\) that are contracted by \(f_*\) (they are in fact the only components of \(\tau^*B\) that are not \(\tau\)-exceptional) which means \(f\) is \textit{\(B\)-strictly negative} in the sense of \cite[Définition 2.2]{Dru2011}. We can then apply \cite[Lemme 2.5]{Dru2011} to conclude that the negative part \(N(B)\) of the Zariski decomposition of \(B\) is actually supported on each irreducible component of \(B\): 
\[|B|=|N(B)|.\]
We know prove that in fact \(B=N(B)\). Following the treatment of Zariski- Boucksom decomposition in \cite{KMPP2019} and \cite{Den2023}, the positive part \(P(B)=B-N(B)\) can be written as
\[P(B)=\sum_i a_iB_i,\]
where \(B_i\) are the irreducible Cartier divisors on which \(B\) is supported and \(a_i\geq0\). What we need to prove is \(a_i=0\) for each \(i\). By construction \(P(B)\) is \(q_X\)-nef, which implies
\[q_X(\sum_i a_i B_i)=q_X(P(B))\geq 0.\]
On the other hand, we already showed that each \(B_i\) is a component of the support of \(N(B)\); by definition of \(q_X\)-exceptional we then have that any nontrivial linear combination of  the \(B_i\) has negative Beauville-Bogomolov square, which allows us to conclude that \(a_i=0\) for each \(i\) and thus \(B=N(B)\).

\end{proof}
\begin{proof}[Proof of Theorem \ref{prop_branch_is_exceptional}]
By hypothesis \(G\) acts on \(Y\) by automorphisms. We observe that the assumptions in Theorem \ref{prop_branch_exceptional} are satisfied. Indeed, one can take \(Z=Y/G\) as a small compactification of the open subscheme \(V/G\subset X\), where as above \(V\) is the \(\Gal(f)\)-stable open subset on which \(f\) is defined. Applying Theorem \ref{prop_branch_exceptional} and Remark \ref{rmk_exceptional_components_independent}, we conclude.
    
\end{proof}



\color{black}
\bibliographystyle{alpha}
\bibliography{main}

\end{document}